\newif\ifarxiv
\newtheorem{lemma}{Lemma}[section]
\newtheorem{theorem}[lemma]{Theorem}
\newtheorem{corollary}[lemma]{Corollary}
\theoremstyle{remark}
\newtheorem{remark}[lemma]{Remark}
\newcommand*{\DRel}{\operatorname{DRel}}
\title{Domination Reliability}
\author{Klaus Dohmen and Peter Tittmann\\ Hochschule Mittweida\\ Technikumplatz 17\\ 09648 Mittweida, Germany}
\begin{document}

\maketitle

\begin{abstract}
\noindent 
In this paper, we propose a new network reliability measure for some particular kind of service networks, 
which we refer to as \emph{domination reliability}.
We relate this new reliability measure to the domination polynomial of a graph and the coverage probability of a hypergraph. 
We derive explicit and recursive formul\ae{} for domination reliability and its associated domination reliability polynomial,
deduce an analogue of Whitney's broken circuit theorem,
and prove that computing domination reliability is NP-hard.
\par\medskip
\noindent \em Keywords: reliability, domination, decomposition, inclusion-exclusion, broken circuit, cograph, hypergraph, NP-hard
\end{abstract}

\section{Introduction}
\label{intro}

All graphs considered in this paper are finite, undirected, and simple. 
We write $G=(V,E)$ to denote that $G$ is a graph having vertex set $V$ and edge set $E$.
We also use $V(G)$ and $E(G)$ to denote the vertex set and edge set of $G$, respectively.
Throughout, we assume that the vertices of $G$ are subject to random and independent failure 
according to some given probability distribution, 
whereas the edges are perfectly reliable. 
One can think of each vertex as a service provider offering some kind of service to that vertex and its neighbours.
Failure of a vertex does not mean that the vertex is not alive; 
it just means that the service provider on that vertex is not available. 
If that happens, the service provider of a neighbouring vertex will do the job provided, 
of course, it is available. 
The probability that each vertex is served by an available service provider 
thus corresponds to the probability that the operating vertices of the graph constitute 
a \emph{dominating set} of the graph, that is, 
a subset $X$ of $V$ where each vertex $v\in V\setminus X$ is adjacent to some vertex in $X$,
in which case we say that $v$ is \emph{dominated} by $X$.
We refer to this probability as the \emph{domination reliability} of $G$. 
Note that this quantity depends on $G$ and the individual vertex operation probabilities $p_{v}=1-q_{v}$, $v\in V$.
We use $\DRel(G,\mathbf{p})$ where $\mathbf{p}=(p_{v})_{v\in V}$ to denote the domination reliability of $G$. 
If all vertex operation probabilities are equal to $p$, 
we write $\DRel(G,p)$ rather than $\DRel(G,\mathbf{p})$.
Throughout this paper, $\mathbf{q} = \mathbf{1}-\mathbf{p}$ for any $\mathbf{p}\in [0,1]^V$,
and $q=1-p$ for any $p\in [0,1]$.

The concept of domination reliability is closely related to that of the
domination polynomial introduced by \textsc{Arocha} and \textsc{Llano} \cite{ArLl00}; 
see also \cite{AAP09,AO09,AP09a,AP09b} for some recent results.

This paper is organized as follows.
In Sections \ref{Decomposition} and \ref{InclusionExclusion} 
we consider the general case where vertex operation probabilities need not be equal.
In Section \ref{Decomposition}
we derive splitting formul\ae{} for sums and joins of vertex-disjoint graphs and a vertex decomposition formula.
The latter forms the basis of a recursive algorithm for computing a modified domination reliability measure for bipartite graphs, 
which in turn can be used to compute the domination reliability of any graph by constructing its neighbourhood graph.
In Section \ref{InclusionExclusion} we derive an inclusion-exclusion expansion for the general case,
and an analogue of Whitney's broken circuit theorem, which is well-known in chromatic graph theory.
In Section \ref{ReliabilityPolynomial} we consider the particular case where all vertex operation probabilities are equal to $p$,
where $p$ is some formal constant. 
It turns out that, in this case, the domination reliability of any graph is a polynomial in $p$,
which we refer to as the \emph{domination reliability polynomial}.
We give explicit and recursive formul\ae{} for the domination reliability polynomial for particular classes of graphs, 
and show that some non-isomorphic graphs cannot be distinguished by their domination reliability polynomial.
In Section \ref{DominationPolynomial} we establish a close relationship with the aforementioned domination polynomial 
due to \textsc{Arocha} and \textsc{Llano} \cite{ArLl00},
and draw some conclusions from our results on domination reliability to the domination polynomial.
In Section \ref{Complexity} we prove that computing the domination reliability polynomial, 
and hence domination reliability in general, is NP-hard.
Section \ref{Coverage} is devoted to the hypergraph reliability covering problem introduced by 
\textsc{Ball}, \textsc{Provan}, and \textsc{Shier} \cite{BPS91} (see also \cite{Shier91}),
which is shown to be equivalent to computing domination reliability of graphs.
The final section contains some concluding remarks.

\section{Decomposition techniques}
\label{Decomposition}

We start our investigation of domination reliability 
with the general case where the vertex operation probabilities need not be equal.
Recall from the above that we assume that the associated events are mutually independent. 

Subsequently, we derive decomposition formul\ae{} for sums and joins of vertex-disjoint graphs.
Recall that the \emph{sum} of any two vertex-disjoint graphs $G=(V,E)$ and $H=(W,F)$ is defined by $G + H:=(V\cup W,E\cup F)$.
The \emph{join} $G\ast H$ of any two vertex-disjoint graphs $G$ and $H$ is obtained from their sum 
by adding an edge between any vertex of $G$ and any vertex of $H$. 
In the particular case, where $G$ and $H$ are edgeless graphs on $s$ and $t$ vertices, respectively,
$G\ast H$ is isomorphic to the complete bipartite graph $K_{s,t}$.

\begin{theorem}
\label{th_union_join}
For any vertex-disjoint graphs $G$ and $H$ and any $\mathbf{p}\in [0,1]^{V(G + H)}$,
\begin{equation}
 \DRel(G + H,\mathbf{p}) = \DRel(G,\mathbf{p}) \DRel(H,\mathbf{p}) \, ,
\end{equation}
and
\begin{multline}
\label{drelast}
\DRel(G\ast H,\mathbf{p}) =  \left[1-\prod\limits_{v\in V(G)} q_{v}\right] \left[1 - \prod\limits_{w\in V(H)} q_{w}\right] \\
+ \left( \prod\limits_{w\in V(H)} q_{w} \right) \DRel(G,\mathbf{p}) + \left( \prod\limits_{v\in V(G)} q_{v} \right) \DRel(H,\mathbf{p})
\end{multline}
where $q_v = 1-p_v$ for any $v\in V(G\ast H)$.
\end{theorem}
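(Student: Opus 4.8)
The plan is to argue directly from the probabilistic definition. Let $S$ denote the random set of operating vertices, so that $\DRel(\cdot,\mathbf{p})$ is the probability that $S$ is a dominating set; recall that the events $\{v\in S\}$, $v\in V(G\ast H)$, are mutually independent with $\Pr[v\in S]=p_v$. The only probabilistic fact I need beyond this is that events determined by disjoint sets of vertices are independent, so their joint probability factorizes.

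For the sum $G+H$ there are no edges between $V(G)$ and $V(H)$, so a non-operating vertex of $G$ can only be dominated from within $G$, and likewise for $H$. Hence $S$ dominates $G+H$ if and only if $S\cap V(G)$ dominates $G$ and $S\cap V(H)$ dominates $H$. These two events depend on the disjoint vertex sets $V(G)$ and $V(H)$ and are therefore independent, so the probability of their conjunction is $\DRel(G,\mathbf{p})\,\DRel(H,\mathbf{p})$.

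For the join $G\ast H$ every vertex of $G$ is adjacent to every vertex of $H$, and I would partition the sample space according to whether $S$ meets $V(G)$ and whether it meets $V(H)$. If both intersections are nonempty, then every non-operating vertex of either side is dominated by any operating vertex of the other side, so $S$ dominates $G\ast H$ with certainty; by independence this contributes $\bigl[1-\prod_{v\in V(G)}q_v\bigr]\bigl[1-\prod_{w\in V(H)}q_w\bigr]$. If $S\cap V(H)=\emptyset$ while $S\cap V(G)\neq\emptyset$, then all of $V(H)$ is dominated automatically and each non-operating vertex of $G$ must be dominated from within $G$, so here $S$ dominates exactly when $S\cap V(G)$ dominates $G$; this event is independent of $S\cap V(H)=\emptyset$ and contributes $\bigl(\prod_{w\in V(H)}q_w\bigr)\DRel(G,\mathbf{p})$. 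The symmetric case yields $\bigl(\prod_{v\in V(G)}q_v\bigr)\DRel(H,\mathbf{p})$, and the remaining case $S=\emptyset$ contributes nothing. Summing the three surviving contributions gives \eqref{drelast}.

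The two factorizations are the routine part; the step that needs care is the case analysis for the join, and in particular the observation that a single operating vertex on one side dominates the whole opposite side, which is what collapses all cross-domination and reduces each mixed case to a within-part domination event. I would also note that the identity presupposes that $G$ and $H$ each have at least one vertex: this is precisely what guarantees that a set dominating $G$ (or $H$) is nonempty, so that the factor $\bigl(\prod_{w}q_w\bigr)\DRel(G,\mathbf{p})$ genuinely counts only configurations with $S\cap V(G)\neq\emptyset$, as the case analysis requires.
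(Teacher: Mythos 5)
Your proposal is correct and takes essentially the same route as the paper's own proof: the paper also treats the sum as immediate from the definition, and proves the join formula by splitting the domination event into the same three pairwise disjoint cases (an operating vertex on each side; all of $H$ failed with $S\cap V(G)$ dominating $G$; the symmetric case), whose probabilities are exactly the three terms of \eqref{drelast}. Your closing remark that the identity tacitly requires $V(G),V(H)\neq\emptyset$ is a genuine precision the paper leaves implicit, and it is indeed needed for the case analysis (and the formula itself) to be valid.
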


\begin{proof}
The first statement follows immediately from the definition of domination reliability and the $+$ operation.
For the second statement, 
we observe that all vertices of $G\ast H$ are dominated if and only if one of the events $A$, $B$ or $C$ occurs:
\begin{enumerate}[\quad ($A$)]
 \item Neither all vertices of $G$ fail, nor all vertices of $H$ fail.
 \item All vertices of $H$ fail, while all vertices of $G$ are operating or dominated.
 \item All vertices of $G$ fail, while all vertices of $H$ are operating or dominated.
\end{enumerate}
Note that these three events are pairwise disjoint, and that their respective probabilities are given (in the same order) by the right-hand side of (\ref{drelast}).
\end{proof}

Theorem \ref{th_union_join} is most significant for cographs. 
Cographs are a comprehensive class of graphs including all complete graphs,
complete bipartite graphs, threshold graphs, and Tur\'{a}n graphs.
Among the many characterizations of cographs (see e.g., \cite{BBS})
the following best serves our purpose: 
A graph is a cograph if and only if it can be obtained from isolated vertices by a
finite sequence of sum and join operations of vertex-disjoint graphs.
With this characterization, the next corollary easily follows.

\begin{corollary}
\label{polytime}
The domination reliability of cographs can be computed in polynomial time.
\end{corollary}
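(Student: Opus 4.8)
The plan is to convert the recursive characterization of cographs into a recursive algorithm, with Theorem \ref{th_union_join} supplying the combination step. First I would invoke the \emph{cotree} of a cograph: a rooted tree whose leaves are the vertices of $G$ and whose internal nodes are labelled either as sum nodes or as join nodes, so that each internal node represents the sum or join of the (vertex-disjoint) induced subgraphs rooted at its children. I would cite the standard cograph-recognition literature (e.g.\ \cite{BBS}) for the fact that such a cotree exists for every cograph and can be constructed in polynomial—indeed linear—time. This cotree is precisely a witness of the sequence of sum and join operations building $G$ from isolated vertices.

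Second, I would evaluate the cotree bottom-up. The base case is a single vertex $v$, which as an induced subgraph is isolated; since an isolated vertex can be dominated only by itself, $\DRel(\{v\},\mathbf{p}) = p_v$. At each internal node I would combine the values already computed at its children by applying the sum formula or the join formula of Theorem \ref{th_union_join}, according to the node's label. Since those formulas are stated for two operands, a node with several children is handled by iterating the binary operation; the sum and join of vertex-disjoint graphs are associative, so the grouping does not matter.

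Third comes the complexity count, where the one point needing care arises. The join formula refers to the products $\prod_{v} q_{v}$ taken over the vertex set of each operand, so alongside $\DRel$ I would carry this product as an auxiliary quantity at every node. It obeys its own elementary recursion—the product at a sum or join node is the product of the children's products, and it equals $q_{v}$ at a leaf—so it need never be recomputed from scratch. With this bookkeeping, processing each internal node costs only a constant number of arithmetic operations (on numbers, or on polynomials in $p$ if the operation probabilities are uniform), and the cotree has $O(n)$ nodes, yielding an overall polynomial running time.

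I do not anticipate a genuine obstacle: the substance is already contained in Theorem \ref{th_union_join} together with the structural characterization of cographs. The only subtlety worth flagging is that the auxiliary products $\prod q_{v}$ must be threaded through the recursion so that the join formula applies in constant time per node; recognizing that this quantity satisfies the same sort of product recursion as $\DRel$ itself is exactly what keeps the algorithm polynomial rather than exponential.
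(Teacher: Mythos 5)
Your proposal is correct and follows essentially the same route as the paper, which simply remarks that the corollary ``easily follows'' from the sum/join characterization of cographs together with Theorem \ref{th_union_join}; your cotree-based bottom-up evaluation, including the bookkeeping of the products $\prod_v q_v$ so each join node costs constant time, is precisely the natural filling-in of that sketch.
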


\begin{remark}
An alternative characterization of cographs is that they are graphs of clique width 2.
In view of this and the general results on clique and tree decompositions in \cite{CMR01},
the statement of Corollary \ref{polytime} can be generalized to graphs of bounded clique width or bounded tree width
if a clique or tree decomposition of minimal width is part of the input.
For the interested reader,
we just mention that the property of being a dominating set can be described in monadic second-order logic (MSOL) by the expression
\[ \textit{dominating}(X) \, \Longleftrightarrow \,\forall v\left(v\in X\vee\exists w\left( w\in X \wedge \textit{adjacent}(v,w)\right) \right) . \]
Since this expression is actually an $\text{MS}_1$-expression (that is, it contains no quantification over edge subsets), 
the general results on $\text{MS}_1$-expressions in \cite{CMR01} apply.
%The details are beyond the scope of this paper and will appear in a future work by the present authors.
\end{remark}

We proceed by establishing a generally applicable recursive method for computing $\DRel(G,\mathbf{p})$.
In order to formulate the method, we use $N_G[J]$ to denote the \emph{closed neighbourhood} of $J$ in $G$, 
that is, the set of all vertices of $G$ which are in $J$ or adjacent to some vertex in $J$,
and write $N_{G}[v]$ instead of $N_{G}[\{v\}]$ for any single vertex $v$.

Our method requires a generalization of our notion of domination reliability.
For any graph $G=(V,E)$, $X,Y\subseteq V$, and $\mathbf{p}\in [0,1]^V$ we use
$\DRel(G,X,Y,\mathbf{p})$ to denote the probability that 
under the assumption that the vertices of $Y$ fail randomly and independently with probability $p_y$, $y\in Y$,
each vertex $x\in X$ is adjacent to some operating vertex $y\in Y$.
According to this definition, $\DRel(G,\mathbf{p}) = \DRel(G,V,V,\mathbf{p})$.

\begin{theorem}
\label{theo_recurrence}
For any graph $G=(V,E)$, $X,Y\subseteq V$, $y\in Y$, and $\mathbf{p}\in [0,1]^V$, 
\begin{multline*} 
\DRel(G,X,Y,\mathbf{p}) \,=\, p_y \DRel(G, X\setminus N_G[y], Y\setminus \{y\},\mathbf{p}) \\ 
+\,(1-p_y)\DRel(G,X,Y\setminus\{y\},\mathbf{p})\, ,
\end{multline*}
provided $X\subseteq N_G[Y]$ and $X\neq\emptyset$. 
Otherwise, $\DRel(G,X,Y,\mathbf{p})$ equals $0$ resp\@. $1$.
\end{theorem}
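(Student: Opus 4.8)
The plan is to prove this by conditioning on the random state of the chosen vertex $y\in Y$ and invoking the law of total probability, exploiting the assumed mutual independence of the vertex states. First I would dispose of the two boundary cases making up the ``otherwise'' clause. If $X=\emptyset$, then the requirement that each vertex of $X$ be dominated by an operating vertex of $Y$ holds vacuously, so $\DRel(G,X,Y,\mathbf{p})=1$. If $X\not\subseteq N_G[Y]$, then some $x\in X$ satisfies $x\notin N_G[y]$ for every $y\in Y$, so $x$ can never be dominated by an operating vertex of $Y$, no matter how the vertices fail; the underlying event is therefore empty and $\DRel(G,X,Y,\mathbf{p})=0$. I would also note that under the hypotheses $X\subseteq N_G[Y]$ and $X\neq\emptyset$ we have $N_G[Y]\neq\emptyset$, hence $Y\neq\emptyset$, so a vertex $y\in Y$ is indeed available to split on and the recurrence is well posed.

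Next I would establish the recurrence itself. Fix $y\in Y$ and condition on whether $y$ operates (probability $p_y$) or fails (probability $1-p_y$); these outcomes are disjoint and exhaustive, and by independence the states of the remaining vertices of $Y$ are distributed exactly as before once $y$ is conditioned upon. If $y$ operates, it dominates precisely the vertices of $N_G[y]$, so every $x\in X\cap N_G[y]$ is already accounted for, and the residual requirement is that every $x\in X\setminus N_G[y]$ be dominated by an operating vertex of $Y\setminus\{y\}$; by definition this conditional probability equals $\DRel(G,X\setminus N_G[y],Y\setminus\{y\},\mathbf{p})$. If $y$ fails, it dominates nothing, so the requirement on $X$ is unchanged but must be met using $Y\setminus\{y\}$, giving conditional probability $\DRel(G,X,Y\setminus\{y\},\mathbf{p})$. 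Weighting these contributions by $p_y$ and $1-p_y$ and summing yields the claimed identity.

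The one genuine subtlety, and the step I would treat most carefully, is the clean factorization in the ``$y$ operates'' branch. This rests on two points: an operating $y$ deterministically dominates exactly its closed neighbourhood $N_G[y]$, so deleting $X\cap N_G[y]$ from the vertices still to be dominated discards nothing essential; and the residual event ``every $x\in X\setminus N_G[y]$ is dominated by an operating vertex of $Y\setminus\{y\}$'' depends only on the states of vertices in $Y\setminus\{y\}$, which by independence are unaffected by conditioning on $y$. I would also remark that the two sub-terms on the right-hand side remain meaningful even when they themselves fall into a boundary case---for instance $\DRel(G,X\setminus N_G[y],Y\setminus\{y\},\mathbf{p})$ equals $1$ precisely when $y$ dominates all of $X$---so the prescribed base values $0$ and $1$ render the recurrence self-consistent and, since each application strictly decreases $|Y|$, guarantee termination.
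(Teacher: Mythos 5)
Your proposal is correct and follows essentially the same route as the paper: conditioning on the state of $y$ via the law of total probability, with the ``$y$ operates'' branch reducing the set to be dominated to $X\setminus N_G[y]$ and the ``$y$ fails'' branch leaving $X$ unchanged, plus the two boundary cases that the paper dismisses as obvious. Your extra care with the boundary cases, the non-emptiness of $Y$, and the independence underlying the factorization only makes explicit what the paper leaves implicit.
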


\begin{proof}
The main case follows from the law of total probability:
If $y$ is operating, which happens with probability $p_y$,
then it dominates all vertices in $N_G[y]$,
so the vertices in $X\setminus N_G[y]$ need to be dominated by $Y\setminus \{y\}$,
which happens with probability $\DRel(G,X\setminus N_G[y], Y\setminus \{y\},\mathbf{p})$.
If $y$ is not operating, which happens with probability $1-p_y$,
then the vertices in $X$ need to be dominated by $Y\setminus\{y\}$,
which happens with probability $\DRel(G,X,Y\setminus\{y\},\mathbf{p})$.
The particular cases $X\not\subseteq N_G[Y]$ and $X=\emptyset$ are obvious.
\end{proof}

\begin{remark}
Theorem \ref{theo_recurrence} gives rise to a recursive algorithm for computing $\DRel$.
For efficiency,
it is recommended to choose $y\in Y$ of maximum degree in each recursive invocation,
and to encode the parameters $X$ and $Y$ as bit vectors of length $|V|$.
\end{remark}

We close this section with discussing a related reliability measure for bipartite graphs.

\medskip

Let $G=(V,W,E)$ be a bipartite graph with vertex-set $V\cup W$ and edge-set $E\subseteq \{\{v,w\}\mathrel| v\in V,\, w\in W\}$.
The vertices in $W$ are assumed to operate randomly and independently with known probabilities $p_w$, $w\in W$.
For $\mathbf{p} = (p_w)_{w\in W}\in [0,1]^{W}$ we denote by
$\DRel^{\leftarrow}(G,\mathbf{p})$ the probability that each vertex in $V$ is adjacent to some operating vertex in $W$.
Accordingly, $\DRel^{\rightarrow}(G,\mathbf{p})$ can be defined by exchanging the roles of $V$ and $W$.
As before, we write $q_v = 1-p_v$ for any $v\in V$ resp\@. $W$, and $\mathbf{q} = \mathbf{1}-\mathbf{p}$.
In order to keep the notation simple,
we allow $\mathbf{p}$ to contain redundant information.

The following theorem provides an analogue of Theorem \ref{theo_recurrence} for $\DRel^{\leftarrow}(G,\mathbf{p})$.
Note that by exchanging the roles of $V$ and $W$, an analogue for $\DRel^{\rightarrow}(G,\mathbf{p})$ is obtained.

\begin{theorem}
\label{drelbipartite}
For any bipartite graph $G=(V,W,E)$, any $w\in W$, and any $\mathbf{p}\in [0,1]^W$,
\[ \DRel^{\leftarrow}(G,\mathbf{p}) \,=\, p_w \DRel^{\leftarrow}(G-w-N_G[w],\mathbf{p}) + (1-p_w)\DRel^{\leftarrow}(G-w,\mathbf{p})\, , \]
provided $V\subseteq N_G[W]$ and $V\neq \emptyset$.
Otherwise, $\DRel^{\leftarrow}(G,\mathbf{p})$ equals $0$ resp\@. $1$.
\end{theorem}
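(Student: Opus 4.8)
The plan is to derive this recurrence from Theorem~\ref{theo_recurrence} rather than prove it from scratch. First I would record the identity $\DRel^{\leftarrow}(G,\mathbf{p}) = \DRel(G,V,W,\mathbf{p})$ in the notation of the generalized domination reliability: the left side is the probability that every vertex of $V$ is adjacent to an operating vertex of $W$, while the right side is the probability that every $x\in V$ is dominated by an operating $y\in W$. These coincide precisely because $G$ is bipartite, so no vertex of $V$ equals a vertex of $W$, and hence ``$x$ lies in the closed neighbourhood of an operating $y\in W$'' is the same as ``$x$ is adjacent to an operating $y\in W$.'' Note also that the preconditions $X\subseteq N_G[Y]$ and $X\neq\emptyset$ of Theorem~\ref{theo_recurrence} become, under $X=V$ and $Y=W$, exactly the preconditions $V\subseteq N_G[W]$ and $V\neq\emptyset$ stated here.

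Next I would apply Theorem~\ref{theo_recurrence} with $X=V$, $Y=W$, and the chosen $y=w$, obtaining
\[ \DRel(G,V,W,\mathbf{p}) = p_w\,\DRel(G,V\setminus N_G[w],W\setminus\{w\},\mathbf{p}) + (1-p_w)\,\DRel(G,V,W\setminus\{w\},\mathbf{p}). \]
It then remains to match the two terms on the right with $\DRel^{\leftarrow}$ of the reduced graphs. For the second term, deleting $w$ leaves a bipartite graph with parts $V$ and $W\setminus\{w\}$, and the edges between these parts are unchanged, so $\DRel(G,V,W\setminus\{w\},\mathbf{p}) = \DRel^{\leftarrow}(G-w,\mathbf{p})$. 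For the first term I would use that, since $w\in W$, the set $N_G[w]$ consists of $w$ together with its neighbours in $V$; deleting these yields a bipartite graph on parts $V\setminus N_G[w]$ and $W\setminus\{w\}$, whence $\DRel(G,V\setminus N_G[w],W\setminus\{w\},\mathbf{p}) = \DRel^{\leftarrow}(G-w-N_G[w],\mathbf{p})$.

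The one point requiring care---and the main thing to verify---is that removing the already-dominated $V$-vertices (the neighbours of $w$) from the graph when $w$ operates does not alter the remaining reliability. This is exactly where bipartiteness is essential: the vertices of $V$ are pairwise non-adjacent, so a $V$-vertex never contributes to dominating another $V$-vertex and its only role is to be dominated by $W$; once the neighbours of $w$ are dominated, deleting them leaves precisely the requirement that $V\setminus N_G[w]$ be dominated by $W\setminus\{w\}$. Finally I would dispatch the boundary cases as in Theorem~\ref{theo_recurrence}: if $V\not\subseteq N_G[W]$ then some $v\in V$ has no neighbour in $W$ and can never be dominated, giving $\DRel^{\leftarrow}(G,\mathbf{p})=0$, while if $V=\emptyset$ the condition holds vacuously, giving $\DRel^{\leftarrow}(G,\mathbf{p})=1$. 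As an alternative, the recurrence can be proved directly by conditioning on whether $w$ operates, mirroring the proof of Theorem~\ref{theo_recurrence} verbatim; the reduction above is simply the more economical route.
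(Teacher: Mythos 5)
Your proof is correct, but it takes a different route from the paper's. The paper proves Theorem~\ref{drelbipartite} in one line, by re-running the proof of Theorem~\ref{theo_recurrence} mutatis mutandis (condition on whether $w$ operates, with $V,W$ in place of $X,Y$) --- which is precisely the ``alternative'' you mention at the end. You instead derive the statement as a formal corollary of Theorem~\ref{theo_recurrence}, via the identification $\DRel^{\leftarrow}(G,\mathbf{p})=\DRel(G,V,W,\mathbf{p})$ (valid because $V\cap W=\emptyset$, so for $x\in V$ membership in $N_G[y]$ with $y\in W$ reduces to adjacency), followed by the bookkeeping that matches $\DRel(G,V\setminus N_G[w],W\setminus\{w\},\mathbf{p})$ and $\DRel(G,V,W\setminus\{w\},\mathbf{p})$ with $\DRel^{\leftarrow}$ of the deleted graphs; since the deleted vertices lie outside the surviving pair of sets, adjacency among survivors is untouched, and the boundary cases translate exactly as you say. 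What your route buys is that the paper's ``analogous'' proof becomes a genuine derivation with no repeated argument, and it makes explicit that the bipartite theorem is an instance of the general one; the paper's route is shorter. Two minor points: where you claim bipartiteness is ``essential'' in the deletion step, it is actually inert --- in the framework of $\DRel(G,X,Y,\mathbf{p})$ only vertices of $Y$ ever dominate, so $V$-vertices play no dominating role by definition, and bipartiteness is needed only for the initial identification of $\DRel^{\leftarrow}$ with $\DRel(G,V,W,\cdot)$. Also, Theorem~\ref{theo_recurrence} formally requires $\mathbf{p}$ indexed by the whole vertex set while here $\mathbf{p}\in[0,1]^W$; the paper's convention that $\mathbf{p}$ may contain redundant information licenses the obvious arbitrary extension, which you could have flagged, since the probabilities attached to $V$ never enter.
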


\begin{proof}
Analogous to the proof of Theorem~\ref{theo_recurrence} with $w$ instead of $y$, and $V$ and $W$ in place of $X$ and $Y$, respectively.
\end{proof}
\enlargethispage*{2\baselineskip}

Theorem~\ref{drelbipartite} gives rise to the following algorithm for computing $\DRel^{\leftarrow}(G,\mathbf{p})$.

\begin{algorithm}[!ht]
\caption{Right to left domination reliability of a bipartite graph}
\label{alg1}
\begin{algorithmic}[1]
\Function{$\DRel^{\leftarrow}$}{$G,\mathbf{p}$}{}
\Comment{$G=(V,W,E)$ \hskip1pt a bipartite graph} \\
\textbf{begin}
\Comment{$\mathbf{p}=(p_{w})_{w\in W}$ vertex reliabilities} 
\If{$V = \emptyset$} 
\State \Return{1}
\EndIf
\If{$\min\{\deg(v) \mid v \in V \}=0$} 
\State \Return{0}
\EndIf
\State Choose an arbitrary vertex $w\in W$ of maximum degree
\State \Return{$p_w \DRel^{\leftarrow}(G-w-N_G[w],\mathbf{p}) + (1-p_w)\DRel^{\leftarrow}(G-w,\mathbf{p})$}
\EndFunction
\end{algorithmic}
\end{algorithm}

\begin{remark}
\label{neighbourhoodmethod}
Algorithm \ref{alg1} can be used to compute $\DRel(G,\mathbf{p})$ for an arbitrary graph $G=(V,E)$
and $\mathbf{p}=(p_v)_{v\in V}\in [0,1]^V$
by applying it to the \emph{neighbourhood graph} of $G$.
By definition, this is the bipartite graph $N(G)=(V',W',E')$ where
$V'=\{(v,0)\mathrel| v\in V\}$,
$W'=\{(v,1)\mathrel| v\in V\}$, and
$E'=\{\{(v,0),(w,1)\}\mathrel| \text{$v=w$ or $\{v,w\}\in E$}\}$.
If we put $p_{(v,1)} := p_v$ for any $v\in V$, and $\mathbf{p}' = (p_w)_{w\in W'}$,
then the output $\DRel^{\leftarrow}(N(G),\mathbf{p}')$ of Algorithm \ref{alg1} coincides with 
the domination reliability $\DRel(G,\mathbf{p})$ of $G$.
\par
The time complexity of this method is, unfortunately, exponential in the size of the graph.
The results in Section \ref{Complexity} show that under reasonable assumptions
there is unlikely to exist a polynomial time algorithm for computing domination reliability.
Nevertheless, the method is applicable for graphs of moderate size, and easy to implement.
\end{remark}

\section{Applying the inclusion-exclusion principle}
\label{InclusionExclusion}

The first part of the following theorem is a straightforward application of the inclusion-exclusion principle.
The second part may be regarded as an analogue of Whitney's broken circuit theorem 
on the chromatic polynomial of a graph \cite{Whitney:1932}.

Let $G=(V,E)$ be a graph whose vertex-set is endowed with a linear ordering relation,
and let $v\in V$.
Motivated by the notion of a broken circuit in \cite{Whitney:1932},
we refer to $N_G[v]\setminus \{v\}$ as a \emph{broken neighbourhood} of $G$ if $v=\max N_G[v]$. 

\begin{theorem}
\label{thm2} 
For any graph $G=(V,E)$ whose vertices fail randomly and
independently with probability $q_{v}=1-p_{v}$ for any $v\in V$,
\begin{equation}
\label{inclusion-exclusion}
\DRel(G,\mathbf{p}) \,=\, \sum_{J\subseteq V}\,(-1)^{|J|}\!\prod_{v\in N_{G}[J]} q_{v} \, ,
\end{equation}
where $\mathbf{p}=(p_{v})_{v\in V}\in [0,1]^V$. 
Moreover, 
if $G$ does not contain isolated vertices,
then for any linear ordering relation on $V$
and any set $\mathscr{X}$ of broken neighbourhoods of~$G$,
\begin{equation}
\label{inclusion-exclusion-improved}
\DRel(G,\mathbf{p}) \, = \! \sum_{J\subseteq V\atop J\not\supseteq X \forall X\in\mathscr{X}} \! (-1)^{|J|}\!\prod_{v\in N_{G}[J]} q_{v} \, .
\end{equation}
\end{theorem}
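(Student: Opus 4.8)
The plan is to treat the two formulae separately, deriving (\ref{inclusion-exclusion}) from the ordinary inclusion--exclusion principle and (\ref{inclusion-exclusion-improved}) by exhibiting a sign-reversing involution that annihilates precisely the terms one wishes to discard.

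For the first formula I would introduce, for each $v\in V$, the event $A_v$ that $v$ fails to be dominated, that is, that every vertex of $N_G[v]$ fails. Since the vertices fail independently, $P(A_v)=\prod_{u\in N_G[v]}q_u$, and more generally $P\bigl(\bigcap_{v\in J}A_v\bigr)=\prod_{u\in N_G[J]}q_u$, because $\bigcap_{v\in J}A_v$ is exactly the event that all vertices of $N_G[J]=\bigcup_{v\in J}N_G[v]$ fail. The operating vertices dominate $G$ precisely when no $A_v$ occurs, so $\DRel(G,\mathbf{p})=1-P\bigl(\bigcup_{v\in V}A_v\bigr)$. Expanding the union by inclusion--exclusion and absorbing the leading constant $1$ into the term indexed by $J=\emptyset$ (whose product is empty and hence equals $1$) yields (\ref{inclusion-exclusion}).

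For the second formula I would show that the discarded terms cancel in pairs. Writing $f(J)=\prod_{u\in N_G[J]}q_u$ and letting $\mathcal{B}$ be the collection of all $J\subseteq V$ with $X\subseteq J$ for some $X\in\mathscr{X}$, it suffices to prove $\sum_{J\in\mathcal{B}}(-1)^{|J|}f(J)=0$. The crucial observation is that if $v=\max N_G[v]$ and its broken neighbourhood $N_G(v)=N_G[v]\setminus\{v\}$ is contained in $J$, then toggling $v$ in or out of $J$ leaves $N_G[J]$, and hence $f(J)$, unchanged: indeed $N_G(v)\subseteq J$ together with $N_G(v)\neq\emptyset$ (this is exactly where the absence of isolated vertices enters) forces $v\in N_G[J]$ and thus $N_G[v]\subseteq N_G[J]$. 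I would call such a $v$ \emph{active} for $J$, observe that $J\in\mathcal{B}$ if and only if $J$ has at least one active vertex, and define $\phi(J)=J\mathbin{\triangle}\{v^{*}\}$, where $v^{*}$ is the smallest active vertex of $J$.

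It then remains to check that $\phi$ is a fixed-point-free, weight-preserving, sign-reversing involution on $\mathcal{B}$. Weight preservation is the observation above, and since $\phi$ changes $|J|$ by exactly one, it reverses the sign $(-1)^{|J|}$. The only delicate point, and the main obstacle, is verifying that $\phi$ is genuinely an involution, i.e.\ that $v^{*}$ is still the smallest active vertex of $J\mathbin{\triangle}\{v^{*}\}$. Here the linear order does the work: any vertex that becomes active only after inserting $v^{*}$ must have $v^{*}$ in its open neighbourhood and therefore, being the maximum of its own closed neighbourhood, must exceed $v^{*}$; deleting $v^{*}$ cannot create new active vertices at all. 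Hence no active vertex smaller than $v^{*}$ can arise, so $\phi(\phi(J))=J$, the signed sum over $\mathcal{B}$ vanishes, and dropping these terms from (\ref{inclusion-exclusion}) gives precisely (\ref{inclusion-exclusion-improved}).
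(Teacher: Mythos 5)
Your derivation of (\ref{inclusion-exclusion}) coincides with the paper's: both introduce the events $A_v$ that all of $N_G[v]$ fails, compute $\Pr\bigl(\bigcap_{v\in J}A_v\bigr)=\prod_{u\in N_G[J]}q_u$, and apply inclusion--exclusion. For (\ref{inclusion-exclusion-improved}) you take a genuinely different route: the paper invokes an external lemma of Dohmen on improved inclusion--exclusion identities (Lemma \ref{hilfslemma}), verifying its hypothesis $\bigcap_{x\in X}A_x\subseteq\bigcup_{v>\max X}A_v$ for each broken neighbourhood $X$, whereas you cancel the discarded terms directly by a sign-reversing, weight-preserving involution $J\mapsto J\bigtriangleup\{v^*\}$. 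Your argument is self-contained and purely combinatorial; its engine is the identity $N_G[J\bigtriangleup\{v\}]=N_G[J]$ when $\emptyset\neq N_G[v]\setminus\{v\}\subseteq J$, which is precisely where the paper also uses $N_G[v]\subseteq N_G[N_G[v]\setminus\{v\}]$ and the absence of isolated vertices. Your check that $\phi$ is an involution is sound: a vertex that becomes active upon inserting $v^*$ has $v^*$ in its closed neighbourhood and is the maximum thereof, hence exceeds $v^*$, while deleting $v^*$ creates no active vertices. (One phrasing in the weight-preservation step should be read charitably: $v\in N_G[J]$ alone does not imply $N_G[v]\subseteq N_G[J]$; you need to combine it with $N_G[v]\setminus\{v\}\subseteq J\subseteq N_G[J]$, which your hypotheses do supply.)

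There is, however, one genuine slip, though it is easily repaired. Your notion of \emph{active} quantifies over all $v$ with $v=\max N_G[v]$, i.e., over \emph{all} broken neighbourhoods, but the theorem allows $\mathscr{X}$ to be an \emph{arbitrary} set of broken neighbourhoods. With your definition, the claimed equivalence ``$J\in\mathcal{B}$ if and only if $J$ has an active vertex'' fails when $\mathscr{X}$ is a proper subset: a set $J$ containing a broken neighbourhood outside $\mathscr{X}$ but none inside $\mathscr{X}$ has an active vertex yet does not lie in $\mathcal{B}$; moreover $\phi$ need not map $\mathcal{B}$ into itself, since the toggled vertex $v^*$ may belong to the unique witnessing $X\in\mathscr{X}$, so deleting it destroys $X\subseteq J$. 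As written, your involution proves that the signed sum vanishes over the larger family of all $J$ containing \emph{some} broken neighbourhood --- i.e., the theorem only for the maximal choice of $\mathscr{X}$. The repair is one line: call $v$ active for $J$ only if additionally $N_G[v]\setminus\{v\}\in\mathscr{X}$. Then $J\in\mathcal{B}$ if and only if an active vertex exists; $\phi(J)\in\mathcal{B}$ because $v^*\notin N_G[v^*]\setminus\{v^*\}$ keeps $v^*$ active after the toggle; and your order argument for $\phi\circ\phi=\mathrm{id}$ goes through verbatim, since the restricted active set is a subset of the unrestricted one. With this correction your proof is complete and arguably more elementary than the paper's, at the cost of redoing by hand the cancellation that Lemma \ref{hilfslemma} provides in greater generality (cf.\ Remark \ref{remainsvalid1}, whose extension to closure-generated systems $\mathscr{X}$ your involution would not immediately cover).
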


\begin{proof}
For any vertex $v\in V$ let $A_{v}$ be the event that neither $v$ nor any of its neighbouring vertices is operating; 
formally,
\begin{equation*}
A_{v}:=\{I\subseteq V\,|\,v\notin N[I]\} .
\end{equation*}
Then, $\bigcap_{v\in V}\overline{A_{v}}$ occurs if and only if the operating vertices constitute a dominating set of $G$, 
whereas $\bigcap_{j\in J}A_{j}$ occurs if and only if all vertices in $N_{G}[J]$ fail. 
By the independence assumption, this happens with probability $\prod_{v\in N_{G}[J]}q_{v}$.
The first part of the theorem now follows by applying the inclusion-exclusion principle to the events $A_v$, $v\in V$.
\end{proof}

For the proof of the second part of Theorem \ref{thm2} the following lemma is needed.
For a different application of this lemma in the context of network reliability, see \cite{Dohmen:1999,Dohmen:1998}. 

\begin{lemma}[\hspace{-.005em}\cite{Dohmen:1999}]
\label{hilfslemma}
Let $\{A_v\}_{v\in V}$ be a finite family of events in some probability space $(\Omega,\mathscr{A},\Pr)$.
Furthermore, let $V$ be endowed with a  linear ordering relation, 
and let $\mathscr{X}$ be a set of non-empty subsets of $V$ such that for any $X\in\mathscr{X}$,
\begin{equation}
\label{condition}
\bigcap_{x\in X} A_x \subseteq \bigcup_{v>\max X} A_v \, .
\end{equation}
Then,
\begin{equation}
\label{inclusion-exclusion-lemma}
\Pr\left(\, \bigcap_{v\in V} \complement A_v \right) \,= \!\sum_{J\subseteq V\atop J\not\supseteq X \forall X\in\mathscr{X}} \! (-1)^{|J|}
\Pr\left(\, \bigcap_{j\in J} A_j \right).
\end{equation}
\end{lemma}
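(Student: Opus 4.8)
The plan is to establish the identity pointwise at the level of indicator functions and then integrate over $\Omega$. For each outcome $\omega\in\Omega$ write $S(\omega):=\{v\in V\mid \omega\in A_v\}$ for the set of events occurring at $\omega$. Since $\Pr(\bigcap_{j\in J}A_j)=\int_\Omega \mathbf{1}_{\bigcap_{j\in J}A_j}\,d\Pr$ and the sum over $J$ is finite, it suffices to prove that for every $\omega$,
\[
\sum_{J\subseteq V\atop J\not\supseteq X\,\forall X\in\mathscr{X}}(-1)^{|J|}\,\mathbf{1}_{\bigcap_{j\in J}A_j}(\omega)\;=\;\mathbf{1}_{\bigcap_{v\in V}\complement A_v}(\omega);
\]
integrating both sides then yields (\ref{inclusion-exclusion-lemma}). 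Observing that $\mathbf{1}_{\bigcap_{j\in J}A_j}(\omega)=1$ exactly when $J\subseteq S(\omega)$, and that $\omega\in\bigcap_v\complement A_v$ exactly when $S(\omega)=\emptyset$, the pointwise identity reduces to the purely combinatorial statement
\[
\sum_{J\subseteq S(\omega)\atop J\not\supseteq X\,\forall X\in\mathscr{X}}(-1)^{|J|}\;=\;\begin{cases}1 & S(\omega)=\emptyset,\\ 0 & S(\omega)\neq\emptyset.\end{cases}
\]

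The next step is to translate the hypothesis (\ref{condition}) into a condition on $S(\omega)$. By definition, $\bigcap_{x\in X}A_x\subseteq\bigcup_{v>\max X}A_v$ says precisely that whenever $X\subseteq S(\omega)$ for some $X\in\mathscr{X}$, there exists $v\in S(\omega)$ with $v>\max X$. In particular, no $X\in\mathscr{X}$ with $X\subseteq S(\omega)$ can satisfy $\max X=\max S(\omega)$. This observation is the crux of the argument.

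With $T:=S(\omega)$ assumed non-empty, I would prove the alternating sum vanishes by exhibiting a sign-reversing involution on the family $\mathscr{J}:=\{J\subseteq T\mid J\not\supseteq X\ \forall X\in\mathscr{X}\}$. Let $m:=\max T$ and define $\phi(J):=J\triangle\{m\}$. The map $\phi$ is clearly an involution reversing the parity of $|J|$, so the only thing to verify---and the main obstacle---is that $\phi$ maps $\mathscr{J}$ into itself. If $m\in J$, then removing $m$ cannot create a new superset of any $X$, so $J\setminus\{m\}\in\mathscr{J}$ automatically. If $m\notin J$ and $J\cup\{m\}\notin\mathscr{J}$, then some $X\in\mathscr{X}$ satisfies $X\subseteq J\cup\{m\}$ but $X\not\subseteq J$, forcing $m\in X$; since $X\subseteq T$ and $m=\max T$, this gives $\max X=m=\max T$, exactly the situation excluded in the previous paragraph. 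Thus $\phi$ is a well-defined involution on $\mathscr{J}$ pairing each $J$ with a set of opposite parity, so the signed sum over $\mathscr{J}$ is $0$ whenever $T\neq\emptyset$. When $T=\emptyset$, the only admissible set is $J=\emptyset$, which contains no non-empty $X$, giving the value $1$.

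Combining the two cases establishes the pointwise identity, and integrating against $\Pr$ completes the proof. The delicate point throughout is the well-definedness of the involution: it is precisely the ordering hypothesis (\ref{condition}), guaranteeing that every $X\in\mathscr{X}$ contained in $S(\omega)$ has its maximum strictly below $\max S(\omega)$, that prevents toggling the top element of $S(\omega)$ from violating the ``$J\not\supseteq X$'' constraint.
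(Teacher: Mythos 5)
Your proof is correct and complete: the reduction to a pointwise indicator identity, the translation of hypothesis (\ref{condition}) into the statement that no $X\in\mathscr{X}$ contained in $S(\omega)$ attains $\max X=\max S(\omega)$, and the fixed-point-free sign-reversing involution $J\mapsto J\triangle\{\max S(\omega)\}$ all check out (removal of the top element is harmless because the admissible sets form a downward-closed family, and the case $S(\omega)=\emptyset$ correctly gives $1$ since every $X\in\mathscr{X}$ is non-empty). Note, however, that the paper offers no proof of Lemma \ref{hilfslemma} at all: it is imported from \cite{Dohmen:1999}, where it is deduced from the theory of abstract tubes. There one verifies that the simplicial complex $\mathscr{S}=\{J\subseteq V \mid J\not\supseteq X\ \forall X\in\mathscr{X}\}$ together with the family $\{A_v\}_{v\in V}$ forms an abstract tube, by showing that for every $\omega\in\bigcup_{v\in V}A_v$ the subcomplex of faces contained in $S(\omega)$ is contractible --- in fact a cone with apex $m=\max S(\omega)$, and the cone property is exactly your verification that toggling $m$ preserves admissibility, resting on the same consequence of (\ref{condition}). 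Your involution is thus the Euler-characteristic shadow of that contractibility argument, and it buys a fully self-contained, elementary proof of the identity. What it does not recover is the stronger conclusion of the abstract-tube machinery: a parity-reversing involution only shows that the total alternating sum vanishes and gives no control over truncated partial sums, whereas contractibility yields the improved Bonferroni inequalities upon truncation to $|J|\le r$ --- precisely the feature the paper invokes in the Remark following Corollary \ref{remainvalid2}. So your route is a legitimate and arguably preferable proof of the equality as stated, with the caveat that anyone also wanting the attendant inequalities must still go through \cite{Dohmen:1999}.
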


\begin{proof}[Proof of Theorem \ref{thm2} (Cont'd).]
Since $G$ does not contain isolated vertices,
the broken neighbourhoods of $G$ are non-empty, and $N_G[v] \subseteq  N_G[ N_G[v]\setminus\{v\}]$ for any $v\in V$.
Let $I\in \bigcap_{x\in X} A_x$ for some $X\in\mathscr{X}$.
Then, $N_G[X]\subseteq V\setminus I$, 
and since $X\in\mathscr{X}$,
$X = N_G[v]\setminus\{v\}$ for some $v\in V$ satisfying $v=\max N_G[v] > \max X$.
Therefore,
$N_G[v] \subseteq  N_G[ N_G[v]\setminus\{v\} ] = N_G[X] \subseteq V\setminus I$
and hence, $I\in A_v$ where $v>\max X$.
Thus, (\ref{condition}) is shown, and hence by Lemma \ref{hilfslemma}, the second part of Theorem \ref{thm2} follows.
\end{proof}

\begin{remark}
\label{remainsvalid1}
The second part of Theorem \ref{thm2} coincides with the first part if $\mathscr{X}$ is empty.
We further remark that by applying Lemma \ref{hilfslemma},
the second part of Theorem \ref{thm2} can be generalized
to any system $\mathscr{X}$ of non-empty subsets of $V=V(G)$ satisfying $\max X < \max c(X)$ for any $X\in\mathscr{X}$
where $c(Y) := \{v\in V \mathrel| N_G[v] \subseteq N_G[Y]\}$ for any $Y\subseteq V$.
This latter condition holds for any broken neighbourhood $X$ of $G$, as well as any subset $X$ of $V$ satisfying $N_G[X]=V$ and $\max V\notin X$.
According to \textsc{Pfaltz} and \textsc{Jamison} \cite{Jamison}, $c$ is a closure operator on $V$,
which is related to digital image processing.
\end{remark}

In the following, we use $\delta(G)$ to denote the minimum degree of $G$.

\begin{corollary}
\label{cor3}
Under the requirements of Theorem~\ref{thm2}, 
if $|V|\ge 1$ and $\delta(G)\ge 1$, then
\[ \DRel(G,\mathbf{p}) \,= \!\! \sum_{J\subseteq V\atop |J|\le |V|-\delta(G)}\!\! (-1)^{|J|} \! \prod_{v\in N_{G}[J]} \! q_{v} 
\,+\, (-1)^{|V|-\delta(G)+1} {|V|-1 \choose \delta(G)-1} \prod_{v\in V} q_v \, .\]
\end{corollary}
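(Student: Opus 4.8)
The plan is to start from the plain inclusion–exclusion expansion \eqref{inclusion-exclusion} of Theorem~\ref{thm2} and to collapse every term indexed by a \emph{large} set $J$ into a single closed-form correction. Writing $n=|V|$ and $d=\delta(G)$ for brevity, the key observation I would establish first is the following threshold claim: whenever $|J|\ge n-d+1$, one has $N_G[J]=V$, so that the product $\prod_{v\in N_G[J]}q_v$ collapses to $\prod_{v\in V}q_v$ and no longer depends on $J$. Granting this, I would split the sum over all $J\subseteq V$ into the range $|J|\le n-d$, which is reproduced verbatim in the first sum of the corollary, and the complementary range $|J|\ge n-d+1$, which I then evaluate combinatorially.

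To prove the threshold claim I would argue by a counting/pigeonhole estimate. Fix $J$ with $|J|\ge n-d+1$ and an arbitrary $v\in V$; if $v\in J$ then $v\in N_G[J]$ trivially, so assume $v\notin J$. The number of vertices lying outside $J$ is $|V\setminus J|=n-|J|\le d-1$, hence the number of vertices outside $J$ other than $v$ itself is at most $d-2$. Since $\delta(G)\ge 1$ guarantees $v$ has at least $d$ neighbours, at most $d-2$ of which can fall outside $J$, at least one neighbour of $v$ must lie in $J$, giving $v\in N_G[J]$. (The degenerate subcases $d=1$ and $|J|=n$ force $J=V$ outright and are handled trivially.) This shows $N_G[J]=V$ for all such $J$, which is exactly what is needed.

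For the final step I would factor $\prod_{v\in V}q_v$ out of the large-$J$ range and reduce to the alternating partial binomial sum $\sum_{j=n-d+1}^{n}(-1)^j\binom{n}{j}$, grouping the $J$'s by cardinality. Using the standard identity $\sum_{j=0}^{k}(-1)^j\binom{n}{j}=(-1)^k\binom{n-1}{k}$ together with $\sum_{j=0}^{n}(-1)^j\binom{n}{j}=0$ (valid since $n\ge 1$), the tail equals $(-1)^{n-d+1}\binom{n-1}{n-d}$, and rewriting $\binom{n-1}{n-d}=\binom{n-1}{d-1}$ yields precisely the stated coefficient $(-1)^{|V|-\delta(G)+1}\binom{|V|-1}{\delta(G)-1}$. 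The main obstacle is really the threshold claim: everything else is bookkeeping once the tail terms are known to be constant, so the care lies in making the degree/pigeonhole count tight and in confirming the boundary cases $d=1$ and $|J|=n$ do not break the argument; the binomial simplification is then routine.
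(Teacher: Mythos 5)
Your proof is correct and takes essentially the same route as the paper's: split the inclusion--exclusion expansion \eqref{inclusion-exclusion} at the threshold $|J|=|V|-\delta(G)$, observe that $N_G[J]=V$ for all larger $J$ so the tail terms are constant, and evaluate the alternating tail via the identity $\sum_{k=m}^{n}(-1)^k\binom{n}{k}=(-1)^m\binom{n-1}{n-m}$ --- which is exactly what your combination of the partial-sum identity with the vanishing full alternating sum reproduces. The only difference is cosmetic: you supply an explicit pigeonhole argument (correctly handling $d=1$) for the threshold claim that the paper dismisses with ``Evidently,'' and note that your phrase ``$\delta(G)\ge 1$ guarantees $v$ has at least $d$ neighbours'' should credit the definition $d=\delta(G)$ rather than the hypothesis $\delta(G)\ge 1$.
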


\begin{proof}
Evidently, $N_G[J]=V$ for any $J\subseteq V$, $|J|>|V|-\delta(G)$. Therefore,
\[ \DRel(G,\mathbf{p}) \,= \!\! \sum_{J\subseteq V\atop |J|\le |V|-\delta(G)}\!\! (-1)^{|J|} \! \prod_{v\in N_{G}[J]} \! q_{v} 
\,+ \! \sum_{J\subseteq V\atop |J|> |V|-\delta(G)}\!\! (-1)^{|J|} \prod_{v\in V} q_v \, . \]
The result now follows by applying the well-known combinatorial identity
\[ \sum_{k=m}^n (-1)^k {n\choose k} = (-1)^m {n-1\choose n-m} \quad (1\le m\le n) . \qedhere\]
\end{proof}

The following corollary is particularly useful if $G$ is a tree.

\begin{corollary}
\label{remainvalid2}
Under the requirements of Theorem \ref{thm2}, 
if $G$ neither contains isolated vertices nor isolated edges, then
\begin{equation}
\label{corollary-identity}
\DRel(G,\mathbf{p}) \,=\, \sum_{J\subseteq V\setminus A}\,(-1)^{|J|}\!\prod_{v\in N_{G}[J]} q_{v}
\end{equation}
where $A$ is any set of vertices of $G$ which are adjacent to a vertex of degree~1.
\end{corollary}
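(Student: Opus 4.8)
The plan is to derive (\ref{corollary-identity}) directly from the broken-circuit formula (\ref{inclusion-exclusion-improved}) of Theorem~\ref{thm2} by choosing the linear ordering and the set $\mathscr{X}$ of broken neighbourhoods so that the admissible index sets $J$ are exactly the subsets of $V\setminus A$. Since $G$ has no isolated vertices, (\ref{inclusion-exclusion-improved}) is available, and it holds for \emph{every} linear ordering of $V$ together with \emph{every} set of broken neighbourhoods of $G$; the freedom in both choices is what we exploit. Concretely, I would aim to exhibit each singleton $\{a\}$ with $a\in A$ as a broken neighbourhood, so that taking $\mathscr{X}=\{\{a\}\mid a\in A\}$ turns the constraint $J\not\supseteq X$ for all $X\in\mathscr{X}$ into $a\notin J$ for all $a\in A$, i.e.\ $J\subseteq V\setminus A$.

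For each $a\in A$ fix a neighbour $u_a$ of degree $1$; such a vertex exists by the definition of $A$. Then $N_G[u_a]=\{u_a,a\}$, so $N_G[u_a]\setminus\{u_a\}=\{a\}$, and $\{a\}$ is a broken neighbourhood precisely when $u_a=\max N_G[u_a]$, that is, when $a<u_a$. The crux is therefore to produce a \emph{single} ordering realising $a<u_a$ for all $a\in A$ simultaneously. This is where the hypothesis that $G$ has no isolated edge enters: if some $a\in A$ had degree $1$, then $\{a,u_a\}$ would be an isolated edge, so in fact every $a\in A$ has degree at least $2$ and is not a leaf. Consequently $a$ can never coincide with any of the chosen leaves $u_{a'}$, and since a degree-$1$ vertex has a unique neighbour the assignment $a\mapsto u_a$ is injective; hence $A$ and $\{u_a\mid a\in A\}$ are disjoint sets. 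Any ordering that places all the leaves $u_a$ above all the vertices of $A$ then satisfies $a<u_a$ for every $a\in A$.

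With this ordering, $\mathscr{X}=\{\{a\}\mid a\in A\}$ is a set of broken neighbourhoods of $G$, and substituting it into (\ref{inclusion-exclusion-improved}) gives exactly (\ref{corollary-identity}), since $J\not\supseteq\{a\}$ for all $a\in A$ is equivalent to $J\cap A=\emptyset$. I expect the main obstacle to be the bookkeeping of the preceding paragraph---verifying that one global ordering can serve all $a\in A$ at once---rather than any analytic difficulty; the key insight is the disjointness of $A$ from the chosen leaves, which is precisely what the no-isolated-edge assumption guarantees. As an alternative to invoking broken neighbourhoods directly, one could instead appeal to the closure-operator reformulation of Remark~\ref{remainsvalid1}, noting that $u_a\in c(\{a\})$ because $N_G[u_a]=\{a,u_a\}\subseteq N_G[a]$, so that $\max\{a\}=a<u_a\le\max c(\{a\})$; this yields the same conclusion.
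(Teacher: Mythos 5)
Your proof is correct and follows essentially the same route as the paper's: the paper also orders $V$ so that the degree-$1$ vertices form an upset (your ``leaves above $A$'' ordering), observes that each edge $\{u_a,a\}$ with $u_a$ a leaf then yields the broken neighbourhood $\{a\}$, and applies Theorem~\ref{thm2} with $\mathscr{X}=\{\{a\}\mid a\in A\}$. Your additional bookkeeping (disjointness of $A$ from the leaf set via the no-isolated-edge hypothesis) is exactly the detail the paper leaves implicit, and your injectivity remark is harmless but unnecessary, since disjointness alone suffices.
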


\begin{proof}
Let $V$ be linearly ordered
such that the vertices of degree 1 form an upset.
Then, any edge $\{v,a\}$ where $v$ is of degree~$1$ and $a\in A$
gives rise to a broken neighbourhood $\{a\}$.
Therefore, with $\mathscr{X} := \{ \{a\}\mathrel| a\in A\}$ 
the corollary follows from Theorem \ref{thm2}.
\end{proof}

\begin{remark}
By the traditional Bonferroni inequalities,
truncating the sum in (\ref{inclusion-exclusion}) to $|J|\le r$ for some non-negative integer $r$
gives upper resp\@. lower bounds to $\DRel(G,\mathbf{p})$, 
depending on whether $r$ is even or odd.
In view of the improved Bonferroni inequalities in \cite{Dohmen:1999},
the same applies---in a more general fashion---to (\ref{inclusion-exclusion-lemma}), and 
thus to (\ref{inclusion-exclusion-improved}) and (\ref{corollary-identity}).
\end{remark}

\section{Domination reliability polynomial}
\label{ReliabilityPolynomial}

In the following, we consider $\DRel(G,p)$ for a graph $G$ and a common vertex operation probability $p$.
It follows from Theorem \ref{thm2} (or Theorem \ref{theo_recurrence}) that $\DRel(G,p)$ is a polynomial in $p$,
which we refer to as the \emph{domination reliability polynomial} of $G$.

In general, the domination reliability polynomial of any graph can be computed using the techniques in Sections \ref{Decomposition} and  \ref{InclusionExclusion}.
As an example, 
the authors applied the method described in Remark \ref{neighbourhoodmethod} to the $5\times5$ grid graph depicted in Figure \ref{gridgraph}.
The result
\begin{align*}
& 22p^{7}+1149p^{8}-305p^{9}-29032p^{10}+115946p^{11}-201109p^{12}+132628p^{13} \\
& +136084p^{14}-414834p^{15}+475677p^{16}-316811p^{17}+117544p^{18}-8108p^{19} \\
& -15506p^{20}+8517p^{21}-2066p^{22}+196p^{23}+12p^{24}-3p^{25}.
\end{align*}
is a polynomial function in $p$, which is shown in Figure \ref{graphplot}. 

\ifarxiv

\begin{figure}[!ht]
\centering
\subfigure[\label{gridgraph} A $5\times 5$ grid graph.]{\includegraphics[bb = 137 613 272 750, clip]{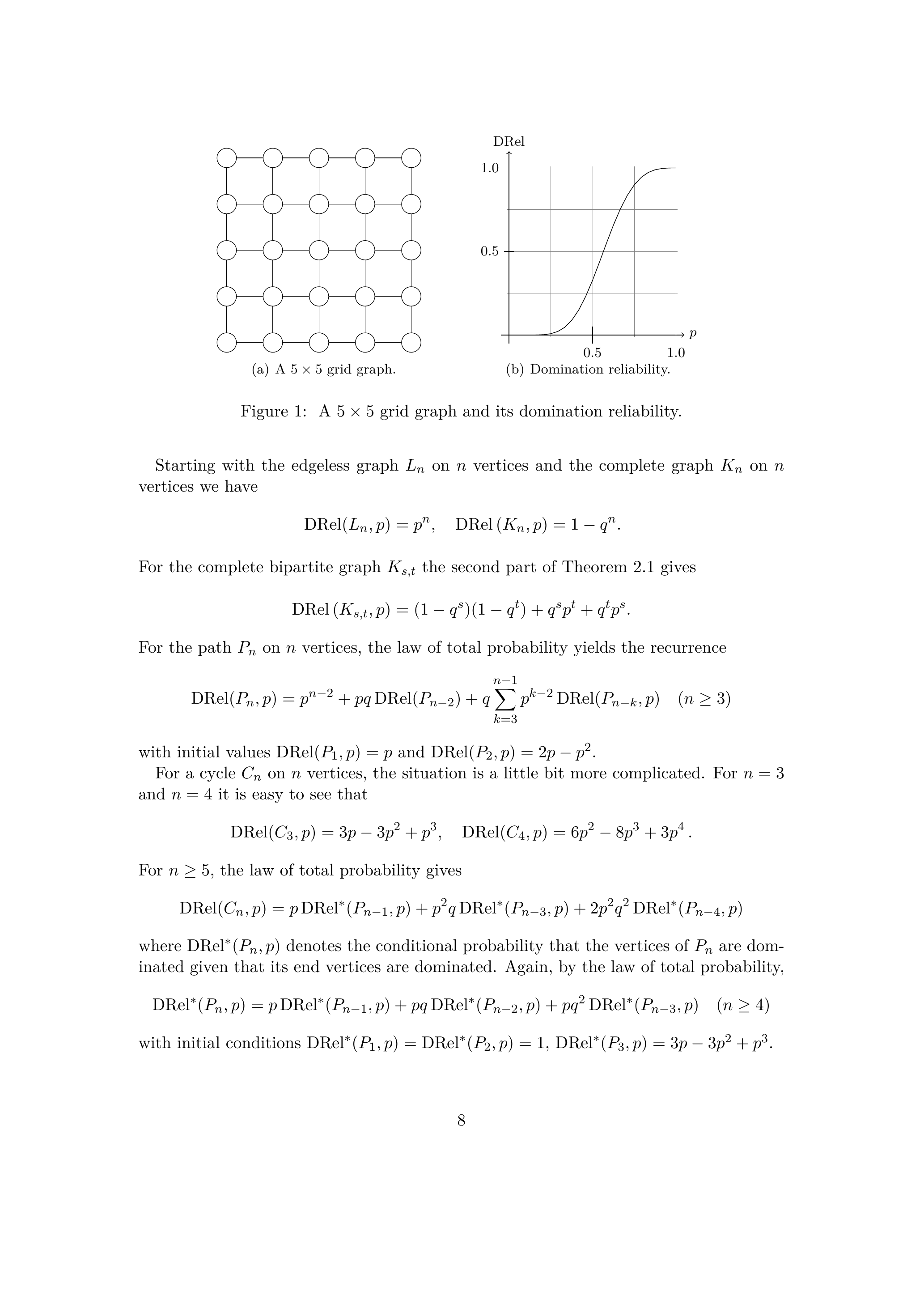}}
\qquad
\subfigure[\label{graphplot} Domination reliability.]{\includegraphics[bb = 306 611 449 756, clip]{Figures.pdf}}
\caption{\label{ReliabilityExample} A $5\times 5$ grid graph and its domination reliability.}
\end{figure}

\else

\begin{figure}[!ht]
\centering
\subfigure[\label{gridgraph} A $5\times 5$ grid graph.]{%
\scalebox{0.7}{%
\begin{tikzpicture}[node distance = .5 cm]
\SetUpEdge[lw = 0.5pt]
\SetVertexNormal[LineWidth=.5pt]
\SetVertexNoLabel
\grEmptyGrid[RA=1.5,RB=1.5]{5}{5}
\foreach \x in {0,1,2,3,4} {
  \foreach \y/\z in {0/1,1/2,2/3,3/4} {
    \Edges(a\x;\y,a\x;\z)
    \Edges(a\y;\x,a\z;\x)
  }
}
\end{tikzpicture}
}
}
\qquad
\subfigure[\label{graphplot} Domination reliability.]{
\footnotesize
\begin{tikzpicture}[domain=0:1,yscale=3.8,xscale=3.8,baseline=-4mm]
\draw[very thin, color=gray] (-0.01,-0.01) grid[step=0.25] (1.01, 1.01);
\draw[->] (-0.05,0) -- (1.05,0) node[right] {$p$};
\draw[->] (0,-0.05) -- (0,1.1) node[above] {$\DRel$};
\draw (0.5,0.05) -- (0.5,-0.05) node[below] {$0.5$};
\draw (1.0,0.05) -- (1.0,-0.05) node[below] {$1.0$};
\foreach \y in {0.5,1.0} {
  \draw (0.03,\y) -- (-0.03,\y) node[left] {$\y$};
}
\draw[color=black] plot function{22*x**7+1149*x**8-305*x**9-29032*x**10+115946*x**11-201109*x**12+132628*x**13+136084*x**14-414834*x**15+475677*x**16-316811*x**17+117544*x**18-8108*x**19-15506*x**20+8517*x**21-2066*x**22+196*x**23+12*x**24-3*x**25};
\end{tikzpicture}
}
\caption{\label{ReliabilityExample} A $5\times 5$ grid graph and its domination reliability.}
\end{figure}

\fi

We proceed our investigation of the domination reliability polynomial by considering some particular classes of graphs,
for which explicit or recursive formul\ae{} can be given.

Starting with the edgeless graph $L_n$ on $n$ vertices and the complete graph $K_n$ on $n$ vertices we have
\begin{align*}
\DRel(L_n,p) & = p^{n}, \quad \DRel\left(K_{n},p\right) = 1-q^{n}. \\
\intertext{For the complete bipartite graph $K_{s,t}$ the second part of Theorem~\ref{th_union_join} gives}
\DRel\left(K_{s,t},p\right) & = (1-q^{s}) (1-q^{t}) + q^{s}p^{t} + q^{t}p^{s}.
\end{align*}
For the path $P_n$ on $n$ vertices, the law of total probability yields the recurrence
\[ \DRel(P_{n},p) = p^{n-2} + pq\DRel(P_{n-2}) + q\sum_{k=3}^{n-1}p^{k-2} \DRel(P_{n-k},p) \quad (n\ge 3) \]
with initial values $\DRel(P_{1},p)=p$ and $\DRel(P_{2},p)=2p-p^{2}$. \par
For a cycle $C_n$ on $n$ vertices, the situation is a little bit more complicated. 
For $n=3$ and $n=4$ it is easy to see that
\[ \DRel(C_{3},p) = 3p - 3p^{2} + p^{3}, \quad \DRel(C_{4},p) = 6p^{2} - 8p^{3} + 3p^{4} \, .  \]
For $n\ge 5$, the law of total probability gives
\[ \DRel(C_{n},p) = p\DRel^\ast(P_{n-1},p) + p^{2}q\DRel^\ast(P_{n-3},p) + 2p^{2} q^{2} \DRel^\ast(P_{n-4},p) \]
where $\DRel^\ast(P_n,p)$ denotes the conditional probability that the vertices of $P_n$ are dominated
given that its end vertices are dominated. 
Again, by the law of total probability,
\[ \DRel^\ast(P_{n},p) = p\DRel^\ast(P_{n-1},p) + pq\DRel^\ast(P_{n-2},p) + pq^{2}\DRel^\ast(P_{n-3},p) \quad (n\ge 4) \]
with initial conditions
$\DRel^\ast(P_{1},p) = \DRel^\ast(P_{2},p)=1$, $\DRel^\ast(P_{3},p) = 3p - 3p^{2} + p^3$.

\begin{remark}
An alternative way to compute the domination reliability of a path on $n$ vertices, which are labelled $1$ through $n$,
is to apply Shier's recursive algorithm \cite[pp.~75--80]{Shier91} (reformulated in \cite[Corollary 2.4]{Dohmen:1998}) to the linearly ordered mincuts 
\[ \{1,2\} \le \{2,3,4\} \le \{3,4,5\} \le \dots \le \{n-3,n-2,n-1\} \le \{n-1,n\} . \]
% This gives an algorithm with time complexity $O(n^3)$ and space complexity $O(n)$.
\end{remark}

\begin{remark}
The domination reliability of a path on $n$ vertices is bounded above by the reliability of a linear consecutive 3-out-of-$n$ failure system.
For a cycle on $n$ vertices, it is equal to the reliability of a circular consecutive 3-out-of-$n$ failure system.
Efficient algorithms for computing these classical reliability measures can be found in \cite{Hwang:1982,Wu-Chen:1993}.
\end{remark}

\begin{remark}
For trees, a similar recurrence as for paths depending on the number of vertices cannot exist.
This is because non-isomorphic trees with equally many vertices may have different domination reliability polynomials.
By exhaustive search, the authors found that this is the case for any pair of non-isomorphic trees with up to six vertices.
This raises the question whether non-isomorphic trees can be distinguished by their domination reliability polynomials. 
This is not the case:
The trees in Figure~\ref{trees} are non-isomorphic, but have the same domination reliability polynomial $4p^{3}-7p^{5}+5p^{6}-p^{7}$.
\end{remark}

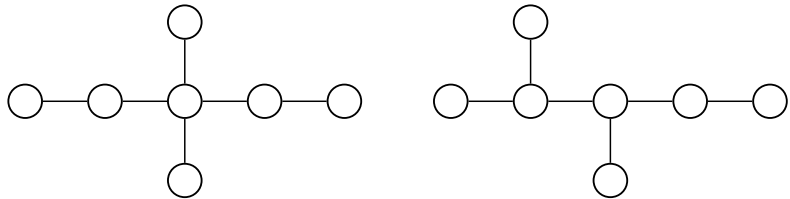
\begin{figure}[ht]
\centering
\scalebox{0.7}{%
\begin{tikzpicture}[node distance = 1.5cm]
\GraphInit[vstyle=Normal]
\SetVertexNoLabel
\Vertices[dir=\EA]{A,B,C,D,E}
\NO(C){F} 
\SO(C){G}
\Edges(A,B,C,D,E)
\Edges(C,F) 
\Edges(C,G)
\tikzset{node distance = 2cm}
\EA(E){H}
\tikzset{node distance = 1.5cm}
\EA(H){I}
\EA(I){J}
\EA(J){K}
\EA(K){L}
\NO(I){M}
\SO(J){N}
\Edges(H,I,J,K,L)
\Edges(I,M)
\Edges(J,N)
\end{tikzpicture}
}
\caption{\label{trees} Non-isomorphic trees with the same domination reliability polynomial.}
\end{figure}

\section{Domination polynomial}
\label{DominationPolynomial}

In this section, we draw some conclusions from our results on domination reliability 
to the domination polynomial of a graph.
This latter polynomial was introduced by \textsc{Arocha} and \textsc{Llano} \cite{ArLl00} for any graph $G=(V,E)$ as the generating function 
\[ D_{G}(x):=\sum_{k=0}^{|V|}d_{k}(G)x^{k} \]
where $d_{k}(G)$ counts the dominating sets of cardinality $k$ in $G$ ($k=0,1,2,\dots$). 

Our first theorem links the domination reliability polynomial of a graph to its domination polynomial.
Thus, we provide a combinatorial interpretation of the latter.

\begin{theorem}
\label{thm1} 
Let $G=(V,E)$ be a graph whose vertices fail randomly and independently with equal probability $q=1-p$. 
Then,
\[ \DRel(G,p) \, = \, q^{n}D_{G}(p/q) \, ,\]
where $n$ denotes the number of vertices in $G$.
\end{theorem}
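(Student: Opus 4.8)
The plan is to compute $\DRel(G,p)$ directly from its definition as a sum over dominating sets, then recognize the result as a rescaled evaluation of the domination polynomial. Recall that $\DRel(G,p)$ is the probability that the set of operating vertices forms a dominating set of $G$. Since each vertex operates independently with probability $p$ and fails with probability $q$, the probability that a \emph{specific} subset $X\subseteq V$ is exactly the set of operating vertices is $p^{|X|}q^{n-|X|}$.

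The key computation is then to sum this over all dominating sets. First I would write
\begin{equation*}
\DRel(G,p) \,=\, \sum_{\substack{X\subseteq V\\ X \text{ dominating}}} p^{|X|}q^{n-|X|}.
\end{equation*}
Next I would group the dominating sets by cardinality, replacing the sum over dominating sets $X$ with a sum over $k=|X|$ weighted by the count $d_k(G)$ of dominating sets of size $k$. This yields
\begin{equation*}
\DRel(G,p) \,=\, \sum_{k=0}^{n} d_k(G)\, p^{k} q^{n-k}.
\end{equation*}

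The final step is purely algebraic: I would factor out $q^n$ and rewrite $p^k q^{n-k} = q^n (p/q)^k$, so that
\begin{equation*}
\DRel(G,p) \,=\, q^n \sum_{k=0}^{n} d_k(G)\, (p/q)^{k} \,=\, q^n D_G(p/q),
\end{equation*}
recognizing the inner sum as precisely $D_G(x)$ evaluated at $x=p/q$, by the definition of the domination polynomial given in the excerpt. This completes the proof.

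I do not expect any genuine obstacle here; the result is essentially a change of variables in a generating function. The only minor point worth flagging is the division by $q$, which requires $q\neq 0$, i.e.\ $p\neq 1$. For $p=1$ the identity holds trivially by continuity (or because every vertex operates, so the operating set is $V$, which is dominating, giving $\DRel(G,1)=1=1^n D_G(1)$ only if one interprets the expression as a polynomial identity cleared of denominators). The cleanest way to phrase the argument is as an identity of polynomials in $p$: both sides are polynomials, and the computation above establishes their equality on $(0,1)$, hence everywhere.
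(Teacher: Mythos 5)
Your proof is correct and follows essentially the same route as the paper's: complete state enumeration over dominating sets, grouping by cardinality to introduce $d_k(G)$, then factoring out $q^n$ to recognize $D_G(p/q)$. Your closing remark about $q\neq 0$ (handled by viewing the result as a polynomial identity) is a sound refinement of a point the paper leaves implicit.
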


\begin{proof}
By complete state enumeration,
\begin{equation*}
%\label{stern}
\DRel(G,p) \, = \!\sum_{\substack{J\subseteq V \\N_{G}[J]=V}}p^{|J|} q^{n-|J|}
\,=\, q^{n}\sum_{k=0}^{n}\sum_{\substack{J\subseteq V \\N_{G}[J]=V \\|J|=k}} \left(\frac{p}{q}\right)^{k}
\,=\, q^{n}\sum_{k=0}^{n}d_{k}(G)\left(\frac{p}{q}\right)^{k},
\end{equation*}
which proves the result.
\end{proof}

\begin{corollary}
\label{zweidarstellungen} For any graph $G$ on $n$ vertices, and any
$p,q\in\lbrack0,1]$,
\begin{align*}
\DRel(G,p) &  =\sum_{k=0}^{n}(-1)^{k}\left(  \sum_{l=0}^{k}(-1)^{l}%
d_{l}(G)\binom{n-l}{n-k}\right)  p^{k}\,,\\
\DRel(G,1-q) &  =\sum_{k=0}^{n}(-1)^{k}\left(  \sum_{l=0}^{k}(-1)^{l}%
d_{n-l}(G)\binom{n-l}{n-k}\right)  q^{k}\,.
\end{align*}
In particular, $\DRel(G,p)$ is a polynomial in $p$.
\end{corollary}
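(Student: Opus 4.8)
The plan is to start from the state-enumeration identity that already appears inside the proof of Theorem~\ref{thm1}, namely
\[ \DRel(G,p) \,=\, \sum_{k=0}^{n} d_k(G)\, p^k q^{n-k}, \qquad q=1-p, \]
and to convert each monomial $p^k q^{n-k}$ into a genuine power in a single variable via the binomial theorem. Because the right-hand side is visibly a finite $\mathbb{Z}$-linear combination of products of powers of $p$ and $1-p$, the concluding \emph{in particular} assertion---that $\DRel(G,p)$ is a polynomial in $p$---drops out the moment either explicit expansion is completed; no separate argument is required.

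For the first identity I would substitute $q=1-p$ and expand $q^{n-k}=(1-p)^{n-k}=\sum_{j=0}^{n-k}(-1)^{j}\binom{n-k}{j}p^{j}$, obtaining
\[ \DRel(G,p) \,=\, \sum_{k=0}^{n}\sum_{j=0}^{n-k}(-1)^{j}\, d_k(G)\binom{n-k}{j}\,p^{k+j}. \]
Collecting equal powers of $p$ by introducing $m=k+j$ (so that the inner index runs $0\le k\le m$), together with the elementary rewritings $\binom{n-k}{m-k}=\binom{n-k}{n-m}$ and $(-1)^{m-k}=(-1)^{m}(-1)^{k}$, delivers precisely the first displayed formula of the corollary once the outer index is relabelled $m\mapsto k$ and the inner one $k\mapsto l$.

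For the second identity the cleanest route is to first reindex the starting sum via $k\mapsto n-k$ into the symmetric shape
\[ \DRel(G,p) \,=\, \sum_{k=0}^{n} d_{n-k}(G)\, p^{n-k} q^{k}, \]
and now to expand $p^{n-k}=(1-q)^{n-k}$ instead, keeping $q^{k}$ intact. The subsequent collection of powers of $q$ is word-for-word the computation of the first part under the formal exchange $p\leftrightarrow q$ and $d_{k}(G)\leftrightarrow d_{n-k}(G)$, so it reproduces the second displayed formula without any new idea.

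The only place demanding care is the reindexing bookkeeping: tracking the sign $(-1)^{m-k}$ and transforming $\binom{n-k}{m-k}$ into the stated $\binom{n-l}{n-k}$, and in the second part verifying that the substitution $k\mapsto n-k$ meshes correctly with the binomial expansion so that the argument of $d$ emerges as $n-l$. I anticipate no genuine obstacle here---the whole proof is a manipulation of finite double sums---only the routine hazard of an off-by-one error or a stray sign.
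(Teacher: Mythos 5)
Your proposal is correct and follows essentially the same route as the paper: the paper's proof also starts from Theorem~\ref{thm1} (whose expanded form is exactly your state-enumeration identity $\DRel(G,p)=\sum_{k=0}^{n}d_k(G)\,p^k q^{n-k}$) and applies the binomial theorem after substituting $q=1-p$, respectively $p=1-q$. Your version merely makes explicit the reindexing bookkeeping ($m=k+j$, $\binom{n-k}{m-k}=\binom{n-k}{n-m}$, and $k\mapsto n-k$ for the second identity) that the paper leaves to the reader, and these steps all check out.
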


\begin{proof}
Both identities follow from Theorem \ref{thm1} by putting $q=1-p$,
respectively $p=1-q$, and applying the binomial theorem.
\end{proof}

As a consequence of Theorem \ref{thm1} in combination with Theorem \ref{thm2} 
we obtain a new inclusion-exclusion expansion 
and an analogue of Whitney's broken circuit theorem \cite{Whitney:1932}:

\begin{theorem}
\label{cor1} 
For any non-empty graph $G=(V,E)$ the domination polynomial satisfies
\begin{align}
\label{DominationIE}
D_{G}(x) \, = \, \sum_{J\subseteq V} (-1)^{|J|} (x+1)^{|V|-|N_{G}[J]|} \,\,\, = \!\! \sum_{J\subseteq V\atop |J|\le |V|-\delta(G)} \!\! (-1)^{|J|} \left[ (x+1)^{|V|-|N_{G}[J]|} - 1 \right] .
\end{align}
Moreover, 
if $G$ does not contain isolated vertices,
then for any linear ordering relation on $V$
and any system $\mathscr{X}$ of broken neighbourhoods of~$G$,
both sums in (\ref{DominationIE}) can be restricted to those subsets $J$ of the vertex-set
which do not include any $X\in\mathscr{X}$ as a subset.
\end{theorem}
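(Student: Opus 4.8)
The plan is to \emph{invert} the identity of Theorem~\ref{thm1} and then feed it the inclusion--exclusion expansions of Theorem~\ref{thm2}. Concretely, for $p\in(0,1)$ put $q=1-p$ and $x=p/q$, so that $x+1=1/q$, hence $q=(x+1)^{-1}$ and $q^{-n}=(x+1)^{n}$. Theorem~\ref{thm1} then reads $D_{G}(x)=q^{-n}\DRel(G,p)=(x+1)^{n}\DRel(G,p)$. Since $p\mapsto p/(1-p)$ maps $(0,1)$ onto the infinite set $(0,\infty)$, any polynomial identity in $x$ that I verify for these values will hold identically, which is what licenses the whole argument.

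For the first equality in \eqref{DominationIE} I would substitute the equal-probability form of \eqref{inclusion-exclusion}, namely $\DRel(G,p)=\sum_{J\subseteq V}(-1)^{|J|}q^{|N_{G}[J]|}$, into $D_{G}(x)=(x+1)^{n}\DRel(G,p)$. Because $(x+1)^{n}q^{|N_{G}[J]|}=(x+1)^{\,n-|N_{G}[J]|}$, the factors collapse term by term and the first sum of \eqref{DominationIE} falls out at once (with $n=|V|$). Running the \emph{same} computation from the improved expansion \eqref{inclusion-exclusion-improved} in place of \eqref{inclusion-exclusion} yields verbatim the same sum but with the index set restricted to those $J$ containing no $X\in\mathscr{X}$; this already settles the broken-circuit refinement for the first sum.

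For the second equality I would exploit that the bracket $(x+1)^{\,|V|-|N_{G}[J]|}-1$ vanishes whenever $N_{G}[J]=V$, in particular for every $J$ with $|J|>|V|-\delta(G)$ (as used in the proof of Corollary~\ref{cor3}). Hence extending the range of the bracketed sum from $|J|\le|V|-\delta(G)$ to all admissible $J$ costs nothing, and one gets $\sum(-1)^{|J|}\bigl[(x+1)^{|V|-|N_{G}[J]|}-1\bigr]=\bigl(\text{first sum}\bigr)-\sum(-1)^{|J|}=D_{G}(x)-\sum(-1)^{|J|}$, the sums ranging over all $J$ in the unrestricted case, and over all $J$ avoiding $\mathscr{X}$ in the restricted case. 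So everything reduces to showing that the trailing alternating sum of $1$'s vanishes. In the unrestricted case this is just $\sum_{J\subseteq V}(-1)^{|J|}=0$, valid since $G$ is non-empty.

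The hard part will be the restricted second sum, where I must establish the analogue $\sum_{J\not\supseteq X\,\forall X}(-1)^{|J|}=0$ over the truncated index set. The observation that unlocks this is that the global maximum $\max V$ lies in no broken neighbourhood: if $\max V\in N_{G}[v]\setminus\{v\}$ then $v=\max N_{G}[v]\ge\max V$ forces $v=\max V$, contradicting $\max V\neq v$. Consequently the family $\{J:J\not\supseteq X\;\forall X\in\mathscr{X}\}$ is a down-set closed under both adjoining and deleting $\max V$, so $J\mapsto J\mathbin{\triangle}\{\max V\}$ is a parity-reversing involution on it and its alternating sum is $0$. Feeding this back gives the restricted second sum and completes the proof. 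I expect this involution step to be the genuine crux; the remaining manipulations are bookkeeping that follow mechanically from Theorems~\ref{thm1} and~\ref{thm2}.
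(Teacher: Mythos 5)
Your proposal is correct and follows essentially the same route as the paper: it pivots through Theorem~\ref{thm1} via $p=x/(x+1)$, $q=(x+1)^{-1}$, substitutes the two expansions of Theorem~\ref{thm2} term by term, and disposes of the trailing alternating sum with the parity-reversing involution $J\mapsto J\bigtriangleup\{\max V\}$, whose applicability you justify by the same key fact the paper isolates in its lemma (namely that $\max X<\max V$, i.e.\ $\max V$ lies in no broken neighbourhood). Your bookkeeping differs only cosmetically, extending the bracketed sum to all admissible $J$ rather than splitting the plain sum at $|J|=|V|-\delta(G)$ as in the paper's identity (\ref{evenodd}).
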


\begin{proof}
For the first part of the theorem,
we may assume without loss of generality that $x>0$. 
Put $p=\frac{x}{x+1}$ and $q=\frac{1}{x+1}$.
By Theorem \ref{thm1} and the first part of Theorem \ref{thm2},
\[ D_G(x) = (x+1)^{|V|} \DRel\left(G;\frac{x}{x+1}\right) = (x+1)^{|V|} \sum_{J\subseteq V} (-1)^{|J|} (x+1)^{-|N_G[J]|} , \]
which proves the first identity. 
The second identity follows from the first one since, as in the proof of Corollary \ref{cor3},
$N_G[J]=V$ for any $J\subseteq V$, $|J|>|V|-\delta(G)$, and hence,
\[ \sum_{J\subseteq V} (-1)^{|J|} (x+1)^{|V|-|N_{G}[J]|} \,\,= \! \sum_{J\subseteq V\atop |J|\le |V|-\delta(G)} \!\! (-1)^{|J|} (x+1)^{|V|-|N_{G}[J]|} \,\,+ \! \sum_{J\subseteq V\atop |J|>|V|-\delta(G)}\!\! (-1)^{|J|} \, . \]
Since 
\begin{equation}
\label{evenodd}
 \sum_{J\subseteq V\atop |J|>|V|-\delta(G)} \! (-1)^{|J|}  \,\,= \,\,- \!\!\! \sum_{J\subseteq V\atop |J|\le|V|-\delta(G)} \!\! (-1)^{|J|} \, ,
\end{equation}
the second identity is proved. 
The second part of the theorem follows from the second part of Theorem~\ref{thm2}
and the following lemma, 
which implies that the sums in (\ref{evenodd}) can be restricted to the same $J\subseteq V$
which do not include any $X\in\mathscr{X}$ as a subset.
\end{proof}

The following lemma completes the proof of Theorem \ref{cor1}.

\begin{lemma}
Let $V$ be a non-empty linearly ordered set, 
and let $\mathscr{X}$ be a system of non-empty subsets of $V$ such that for any $X\in\mathscr{X}$, $\max X < \max V$.
Then there are as many even as odd subsets of $V$ which do not include any $X\in\mathscr{X}$ as a subset.
\end{lemma}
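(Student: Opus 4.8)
The plan is to exhibit a parity-reversing, fixed-point-free involution on the family of \emph{admissible} subsets of $V$, namely those $J\subseteq V$ that do not include any $X\in\mathscr{X}$ as a subset. The key structural fact I would extract from the hypothesis is that the condition $\max X<\max V$ for every $X\in\mathscr{X}$ is equivalent to saying $m\notin X$ for all $X\in\mathscr{X}$, where $m:=\max V$. This singles out $m$ as an element whose membership in $J$ is entirely irrelevant to the question of whether $J$ contains some $X\in\mathscr{X}$.

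Concretely, I would define $\phi(J):=J\mathbin{\triangle}\{m\}$, the symmetric difference that inserts $m$ into $J$ when $m\notin J$ and deletes it otherwise. This $\phi$ is visibly an involution, it has no fixed point since it always changes the status of $m$, and it alters $|J|$ by exactly one, hence flips the parity of $|J|$. First I would establish that $\phi$ preserves admissibility: because $m\notin X$ for every $X\in\mathscr{X}$, inserting or deleting $m$ can neither create nor destroy a containment, so for each fixed $X$ one has $X\subseteq J$ if and only if $X\subseteq\phi(J)$. Consequently $J$ is admissible precisely when $\phi(J)$ is.

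With these properties in hand, the conclusion is immediate: $\phi$ partitions the admissible subsets into two-element orbits $\{J,\phi(J)\}$, each orbit containing exactly one subset of even cardinality and one of odd cardinality. Therefore the even and the odd admissible subsets are equinumerous, as desired. I expect the only step requiring any care to be the equivalence $X\subseteq J\iff X\subseteq\phi(J)$, which is precisely where the hypothesis $\max X<\max V$ enters in an essential way; the remaining verifications that $\phi$ is a fixed-point-free, parity-reversing involution are routine.
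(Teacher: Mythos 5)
Your proposal is correct and uses exactly the map the paper uses: the paper's proof is the one-line observation that $S\mapsto S\bigtriangleup\{\max V\}$ is a bijection between the even and odd admissible subsets, and your write-up simply spells out the verification (that $\max X<\max V$ means $\max V\notin X$, so the symmetric difference with $\{\max V\}$ preserves admissibility) that the paper leaves as "straightforward to check."
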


\begin{proof}
It is straightforward to check that $S\mapsto S\bigtriangleup \{\max V\}$
defines a one-to-one correspondence between the even and the odd subsets of $V$
not including any $X\in\mathscr{X}$.
\end{proof}

\begin{remark}
\label{reformulation}
For $V=\{1,\dots,n\}$, 
Theorem \ref{cor1} can equivalently be stated as 
\begin{multline}
\label{multlinelabel}
D_{G}(x) \,=\, \sum_{I\subseteq V}(-1)^{|V|-|I|}f_{G}(1_{I}(1),\dots,1_{I}(n);x) \\
         = \sum_{\genfrac{}{}{0pt}{}{I\subseteq V}{|I|\ge \delta(G)}}(-1)^{|V|-|I|}\left[f_{G}(1_{I}(1),\dots,1_{I}(n);x)-1\right]
\end{multline}
where $1_{I}$ denotes the indicator function of $I$, and $f_{G}$ the generating function
\[ f_{G}(i_{1},\dots,i_{n};x)\,:=\,\prod_{v\in V}\left(  1\,+\,x\!\prod_{w\in N_{G}[v]}\!i_{w}\right) . \]
Note that $I$ is a substitute for $V\setminus J$ in Theorem \ref{cor1}.
Therefore,
if $G$ does not contain isolated vertices,
then for any system $\mathscr{X}$ of broken neighbourhoods of~$G$,
both sums in (\ref{multlinelabel}) can be restricted to those subsets $I$ of $\{1,\dots,n\}$
that intersect any $X\in\mathscr{X}$.
\end{remark}

\begin{remark}
The contents of Remark \ref{remainsvalid1} analogously apply to Theorem \ref{cor1} and its preceding reformulation.
\end{remark}

\begin{corollary}
\label{newcor}
Under the requirements of Theorem \ref{cor1}, 
if $G$ neither contains isolated vertices nor isolated edges, then
\begin{equation*}
D_{G}(x) \, = \, \sum_{J\subseteq V\setminus A} (-1)^{|J|} (x+1)^{|V|-|N_{G}[J]|} \,\,\, = \!\! \sum_{J\subseteq V\setminus A\atop |J|\le |V|-\delta(G)} \!\! (-1)^{|J|} \left[ (x+1)^{|V|-|N_{G}[J]|} - 1 \right] .
\end{equation*}
where $A$ is any set of vertices of $G$ which are adjacent to a vertex of degree~1.
\end{corollary}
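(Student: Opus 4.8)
The plan is to mirror the proof of Corollary~\ref{remainvalid2}, replacing the appeal to Theorem~\ref{thm2} by the corresponding statement in Theorem~\ref{cor1}. The key observation is that the hypotheses (no isolated vertices, no isolated edges) are exactly what is needed to exhibit a convenient system $\mathscr{X}$ of broken neighbourhoods whose restriction on the index set $J$ amounts to $J\subseteq V\setminus A$.

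First I would fix a linear ordering of $V$ in which the vertices of degree~$1$ form an upset, so that every degree-$1$ vertex is larger than every vertex of degree at least~$2$. For each $a\in A$ pick a degree-$1$ vertex $v$ adjacent to $a$. Since $G$ contains no isolated edge, $a$ has degree at least~$2$, hence $a$ is not of degree~$1$, and therefore $a<v$ in the chosen ordering. Consequently $N_G[v]=\{v,a\}$ with $v=\max N_G[v]$, so $N_G[v]\setminus\{v\}=\{a\}$ is a broken neighbourhood of $G$.

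Next I would set $\mathscr{X}:=\{\{a\}\mid a\in A\}$; by the previous step this is a system of broken neighbourhoods of $G$. Because $G$ has no isolated vertices, the second part of Theorem~\ref{cor1} applies, and it permits restricting both sums in (\ref{DominationIE}) to those $J\subseteq V$ that do not include any member of $\mathscr{X}$ as a subset. Since the members of $\mathscr{X}$ are precisely the singletons $\{a\}$ with $a\in A$, the requirement $J\not\supseteq\{a\}$ for all $a\in A$ is equivalent to $J\cap A=\emptyset$, that is, $J\subseteq V\setminus A$. Substituting this index constraint into the two expressions of (\ref{DominationIE}) yields the two claimed identities.

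There is no serious obstacle here; the only point requiring care is the verification that each $\{a\}$ is genuinely a broken neighbourhood, which is precisely where the exclusion of isolated edges enters, guaranteeing that the neighbour $a$ of a degree-$1$ vertex $v$ falls strictly below $v$ in the ordering. Once the ordering is fixed and that verification is in place, the corollary is an immediate specialisation of Theorem~\ref{cor1} to the system $\mathscr{X}$ above.
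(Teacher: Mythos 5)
Your proposal is correct and follows essentially the same route as the paper: the paper's proof simply says the corollary follows from Theorem~\ref{cor1} exactly as Corollary~\ref{remainvalid2} follows from Theorem~\ref{thm2}, and that earlier proof uses precisely your construction---order $V$ so that the degree-$1$ vertices form an upset, note that the absence of isolated edges makes each $\{a\}$, $a\in A$, a broken neighbourhood, and take $\mathscr{X}=\{\{a\}\mathrel| a\in A\}$. Your explicit verification that $a<v=\max N_G[v]$ is exactly the detail the paper leaves implicit, so no gap remains.
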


\begin{proof}
Corollary \ref{newcor} follows from Theorem \ref{cor1} in the same way as Corollary \ref{remainvalid2} follows from Theorem~\ref{thm2}.
\end{proof}

\section{Computational complexity}
\label{Complexity}

Apart from particular classes of graphs the methods developed in the previous sections
exhibit an exponential time behaviour.
There is not much hope to do better.

\begin{theorem} 
\label{complexity}
Computing the domination reliability polynomial of a graph is NP-hard.
\end{theorem}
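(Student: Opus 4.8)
The plan is to establish NP-hardness by a reduction from a known NP-hard counting or decision problem about dominating sets. The natural candidate is the \textsc{Domination} problem---deciding whether a graph $G$ has a dominating set of size at most $k$---which is a classical NP-complete problem, or its counting version \#\textsc{Dominating Sets}, which is \#P-complete. The key observation, made precise by Theorem~\ref{thm1}, is that the domination reliability polynomial $\DRel(G,p)$ encodes exactly the same information as the domination polynomial $D_G(x)$, since $\DRel(G,p) = q^n D_G(p/q)$ with $q = 1-p$. In particular, the coefficients of $D_G$, namely the numbers $d_k(G)$ of dominating sets of each cardinality $k$, are recoverable from $\DRel(G,p)$ by a polynomial-time invertible linear transformation (the change of variables in Corollary~\ref{zweidarstellungen} makes this explicit).

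First I would argue that producing the domination reliability polynomial as an explicit list of coefficients lets one read off, or reconstruct in polynomial time, all the coefficients $d_k(G)$ of the domination polynomial. Concretely, given $\DRel(G,p)$ as a polynomial in $p$, one substitutes $p = x/(x+1)$ and multiplies by $(x+1)^n$ to obtain $D_G(x)$; equivalently, one inverts the binomial relations in Corollary~\ref{zweidarstellungen}. Having the full vector $(d_0(G), d_1(G), \dots, d_n(G))$ in hand, one immediately obtains the minimum size of a dominating set as the least $k$ with $d_k(G) > 0$, which solves the decision problem \textsc{Domination}. Since \textsc{Domination} is NP-complete, this chain of polynomial-time reductions shows that computing the domination reliability polynomial is NP-hard.

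Alternatively---and this gives a slightly stronger conclusion---I would reduce directly from the \#P-complete problem of counting dominating sets: the total number of dominating sets of $G$ is $D_G(1) = \sum_k d_k(G)$, which by Theorem~\ref{thm1} equals $2^n \DRel(G, 1/2)$. Thus even evaluating $\DRel(G,p)$ at the single point $p = 1/2$ determines the number of dominating sets, so a polynomial-time algorithm for domination reliability would yield one for \#\textsc{Dominating Sets}. Either route suffices; the cleaner statement for an NP-hardness claim is the reduction from the decision problem \textsc{Domination} via recovery of the $d_k(G)$.

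The main obstacle, and the place requiring care, is the justification that computing ``the domination reliability polynomial'' genuinely delivers the coefficients $d_k(G)$ rather than merely an oracle for evaluation at a prescribed rational point. The cleanest formulation interprets the output as the coefficient sequence of a polynomial in $p$, in which case the linear recovery of $d_k(G)$ via Corollary~\ref{zweidarstellungen} is a routine polynomial-time transformation and the obstacle dissolves. If instead the problem is taken to be pointwise evaluation, I would lean on the \#P reduction through the point $p = 1/2$, noting that the factor $2^n$ is an exact integer scaling and so the count of dominating sets is recovered exactly. I expect the formal argument to hinge entirely on establishing NP-hardness of the source problem \textsc{Domination} (which is standard and can be cited) and on verifying that the transformation between $\DRel$ and $D_G$ is polynomial-time computable and invertible---a consequence of the identities already proved in Section~\ref{DominationPolynomial}.
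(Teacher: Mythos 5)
Your proposal is correct, and it reaches the conclusion by a route that differs from the paper's in both the source of hardness and the mechanics of the reduction. The paper starts from the result of Flum and Grohe that computing $d_k(G)$ is $\#W[2]$-complete (whence computing the domination polynomial is NP-hard), and then transfers this to domination reliability by \emph{interpolation}: it evaluates $\DRel(G,p_i)$ at $n+1$ distinct points, obtains via Theorem \ref{thm1} a Vandermonde system of linear equations in the unknowns $d_0(G),\dots,d_n(G)$, and solves it in polynomial time by Gaussian elimination. You instead reduce from the classical NP-complete decision problem \textsc{Dominating Set} (or from the \#P-complete counting version), recovering $D_G(x)=(x+1)^n\,\DRel\bigl(G,x/(x+1)\bigr)$ directly from the coefficient list and reading off the least $k$ with $d_k(G)>0$; your fallback identity $D_G(1)=2^n\,\DRel(G,1/2)$ handles the evaluation-oracle interpretation with a single evaluation point. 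What each approach buys: your hardness sources are more elementary and textbook-citable than the parametrized-complexity result (indeed the paper's ``immediately'' from $\#W[2]$-completeness to classical NP-hardness is itself a small leap that your route sidesteps), and your reduction avoids interpolation entirely when the output is the coefficient vector; the paper's Vandermonde argument, on the other hand, is agnostic to the output format---it needs only the ability to evaluate $\DRel$ at points of its choosing, and it recovers the full coefficient sequence $(d_0(G),\dots,d_n(G))$ in one stroke, so it covers both of your cases with a single uniform reduction. Two details to nail down in your write-up: the \#P-completeness of counting dominating sets does need a citation (it is standard, but it is genuinely used, since the total count $D_G(1)$ alone does not answer the decision question---every graph has $V$ as a dominating set, so $D_G(1)\ge 1$ always); and you should state explicitly that NP-hardness is meant under polynomial-time Turing reductions, which is what both your argument and the paper's implicitly employ.
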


\begin{proof}
We first note that computing the domination polynomial of a graph $G$ is NP-hard.
This follows immediately from a result of Flum and Grohe \cite{FlGr04}, 
who proved that computing $d_k(G)$ is $\#W[2]$-complete in the sense
of parametrized complexity.
Therefore, it remains to show that computing the domination polynomial of a graph is polynomially reducible 
to computing its domination reliability polynomial. \par
To this end, consider a graph $G$ having $n$ vertices, all operating randomly and independently with equal probability. 
Choose $n+1$ different values $p_{0},\dots,p_{n}\in\lbrack0,1]$, 
and put $q_{i}=1-p_{i}$ for $i=0,\dots,n$.
By Theorem~\ref{thm1},
\[ q_{i}^{-n}\,\DRel(G,p_{i}) \,=\, \sum_{k=0}^{n}d_{k}(G)\left(\frac{p_{i}}{q_{i}}\right)^{k} \quad (i=0,\dots,n). \]
This is a system of $n+1$ linear equations in the $n+1$ unknowns $d_{0}(G),\dots,d_{n}(G)$, 
which is uniquely solvable since its coefficient matrix is a Vandermonde matrix with non-zero determinant. 
The solution can be found in polynomial time by Gaussian elimination.
\end{proof}

\begin{corollary}
Computing the domination reliability of a graph $G$ is NP-hard.
\end{corollary}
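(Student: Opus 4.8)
The plan is to derive the corollary from Theorem~\ref{complexity} by a polynomial-time interpolation reduction. The key observation is that evaluating a single value $\DRel(G,p)$ for one fixed operation probability $p$ is nothing but the special case of computing $\DRel(G,\mathbf{p})$ in which every entry of $\mathbf{p}$ equals $p$. Hence any algorithm for domination reliability in general can, in particular, be used to evaluate $\DRel(G,p)$ at any prescribed point $p$, and it suffices to show that this point-evaluation task is already NP-hard.

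First I would recall that for a graph $G$ on $n$ vertices, $\DRel(G,p)$ is a polynomial in $p$ of degree at most $n$; this is exactly the content of Corollary~\ref{zweidarstellungen}. Since a polynomial of degree at most $n$ is uniquely determined by its values at $n+1$ distinct points, I would then argue as follows: assuming a polynomial-time algorithm $\mathcal{A}$ for domination reliability, choose $n+1$ distinct points $p_0,\dots,p_n$ and use $\mathcal{A}$ to compute the values $\DRel(G,p_0),\dots,\DRel(G,p_n)$, each in polynomial time. By Lagrange interpolation---equivalently, by solving the associated nonsingular Vandermonde system, exactly as in the proof of Theorem~\ref{complexity}---the coefficients of the domination reliability polynomial can be recovered in polynomial time. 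This would furnish a polynomial-time algorithm for computing the domination reliability polynomial, contradicting Theorem~\ref{complexity}. Therefore computing domination reliability is NP-hard.

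I do not expect a genuine obstacle here, since the substantive work of establishing NP-hardness was already done in Theorem~\ref{complexity}; the present argument is essentially its inverse. There, point evaluations of $\DRel(G,\cdot)$ were fed through a Vandermonde system to recover the domination polynomial, whereas here the same point evaluations are used to recover the domination reliability polynomial itself. The only points requiring care are the correctness of the degree bound (supplied by Corollary~\ref{zweidarstellungen}) and the fact that interpolation through $n+1$ points runs in time polynomial in $n$; both are routine.
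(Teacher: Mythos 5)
Your proposal is correct and takes essentially the same route as the paper: the corollary is stated there without separate proof precisely because the proof of Theorem~\ref{complexity} already reduces an NP-hard problem to $n+1$ point evaluations of $\DRel(G,\cdot)$ followed by solving a nonsingular Vandermonde system, which is exactly your interpolation reduction. The only cosmetic difference is that you recover the coefficients of the domination reliability polynomial and then cite Theorem~\ref{complexity}, whereas the paper's Vandermonde step recovers the numbers $d_k(G)$ directly (and note that, strictly speaking, the conclusion rests on this polynomial-time Turing reduction rather than on a literal ``contradiction'' with Theorem~\ref{complexity}).
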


\begin{corollary}
Computing the left to right (resp\@. right to left) domination reliability of a bipartite graph is NP-hard.
\end{corollary}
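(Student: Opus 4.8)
The plan is to reduce the computation of the ordinary domination reliability $\DRel(G,\mathbf{p})$ of an arbitrary graph---which is NP-hard by the preceding corollary---to the computation of the right to left domination reliability $\DRel^{\leftarrow}$ of a bipartite graph, and then to obtain the left to right case $\DRel^{\rightarrow}$ by symmetry.

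First I would invoke the neighbourhood-graph construction from Remark~\ref{neighbourhoodmethod}. Given an instance consisting of an arbitrary graph $G=(V,E)$ together with vertex operation probabilities $\mathbf{p}=(p_v)_{v\in V}$, I build the bipartite neighbourhood graph $N(G)=(V',W',E')$ and set $\mathbf{p}'=(p_w)_{w\in W'}$ via $p_{(v,1)}:=p_v$. Since $|V'|=|W'|=|V|$ and $|E'|=|V|+2|E|$, this transformation is computable in time polynomial in the size of $G$.

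Next I would apply the identity recorded in Remark~\ref{neighbourhoodmethod}, namely $\DRel^{\leftarrow}(N(G),\mathbf{p}')=\DRel(G,\mathbf{p})$. Thus any algorithm that computes $\DRel^{\leftarrow}$ for bipartite graphs, prefixed by the polynomial-time construction above, computes $\DRel(G,\mathbf{p})$ for arbitrary $G$; as the latter task is NP-hard, so is computing $\DRel^{\leftarrow}$. For the left to right variant I would observe that interchanging the two colour classes $V$ and $W$ of a bipartite graph converts an instance of $\DRel^{\rightarrow}$ into an instance of $\DRel^{\leftarrow}$ on the same underlying graph; since this relabelling is trivially polynomial, the two problems are polynomially interreducible, and computing $\DRel^{\rightarrow}$ is NP-hard as well.

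I do not expect any genuine obstacle here, as all the mathematical substance is already contained in the neighbourhood-graph identity of Remark~\ref{neighbourhoodmethod}. The only point that needs (entirely routine) checking is that the reduction as a whole runs in polynomial time, which is immediate from the explicit description of $N(G)$.
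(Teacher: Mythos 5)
Your proposal is correct and matches the argument the paper intends (and leaves implicit): the neighbourhood-graph identity $\DRel^{\leftarrow}(N(G),\mathbf{p}')=\DRel(G,\mathbf{p})$ of Remark~\ref{neighbourhoodmethod} yields a polynomial-time reduction from domination reliability, already shown NP-hard, to the right to left case, and swapping the two colour classes handles the left to right case. Nothing is missing; the polynomial-time bound on constructing $N(G)$ that you verify is indeed the only routine point to check.
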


\section{Reliability of hypergraphs}
\label{Coverage}

In this section, we show that the domination reliability of a graph can be expressed 
in terms of the coverage probability of an associated hypergraph, and vice versa.

Recall that a \emph{hypergraph} is a couple $H=(V,\mathscr{E})$ 
where $V$ is a finite set and $\mathscr{E}$ is a set of non-empty subsets of $V$. 
The elements of $V$ resp. $\mathscr{E}$ are \emph{vertices} and \emph{edges} of $H$. 
A \emph{covering} of $V$ is a subset $\mathscr{X}$ of $\mathscr{E}$ such that $\bigcup\mathscr{X}=V$. 
In contrast to our discussion on graphs and their domination reliability, 
the vertices of the hypergraph are assumed to be perfectly reliable, 
whereas the edges fail randomly and independently with known probabilities, 
given by a vector $\mathbf{q}=\mathbf{1}-\mathbf{p}\in\lbrack0,1]^{\mathscr{E}}$. 
The \emph{coverage probability} of $H$, which is abbreviated to $\text{Cov}(H,\mathbf{p})$,
is the probability that the operating edges of $H$ constitute a covering of the vertex-set of $H$.

Given a hypergraph $H=(V,\mathscr{E})$ and a probability vector $\mathbf{p} = (p_{I})_{I\in\mathscr{E}}\in[0,1]^{\mathscr{E}}$, 
construct a graph $G$ on $V\cup\mathscr{E}$ with edge set
\[ \{ \{v,E\}\mathrel| v\in V,\, E\in\mathscr{E},\, v\in E\} \cup\{ \{E,F\}\mathrel| E,F\in\mathscr{E},\,E\neq F\} . \]
By defining $p_{v} = 0$ for any $v\in V$ 
(in addition to the given probabilities $p_{I}$ where $I\in\mathscr{E}$), 
a probability of operation is associated with any vertex of $G$. 
Evidently, by this construction, the coverage probability of $H$ equals the domination reliability of $G$.

On the other hand, the domination reliability of any graph can be expressed in
terms of the coverage probability of an associated hypergraph: 
Given a graph
$G=(V,E)$ and $\mathbf{p}=(p_{v})_{v\in V}\in\lbrack0,1]^{V}$, 
construct $H=(V,\mathscr{E})$ where $\mathscr{E}=\{N_{G}[v]\mathrel|v\in V\}$ 
and where for any $E\in\mathscr{E}$, 
$p_{E}$ is the probability that some vertex in $\{v\in V\mathrel|N_{G}[v]=E\}$ is operating in $G$. 
By the principle of inclusion-exclusion this probability can be computed as
\[
p_{E}=\sum_{\substack{I\subseteq V,\,I\neq\emptyset \\ N_G[i]=E \,\forall i\in I}}(-1)^{|I|-1}%
\prod_{i\in I}p_{i}\quad(E\in\mathscr{E}).
\]
By this construction, the domination reliability of $G$ equals the coverage probability of~$H$. 
Thus, the equivalence of the two concepts is shown.

The concept of coverage probability of hypergraphs goes back to \textsc{Ball}, \textsc{Provan}, and \textsc{Shier} \cite{BPS91}. 
The significance of this concept is due to the fact that 
the reliability of any coherent binary system can be expressed in terms of the coverage probability of an associated hypergraph, 
and vice versa (see \cite{BPS91,Shier91} for definition and details).

\begin{comment}
As a conclusion, any coherent binary system can be transformed into a graph
whose domination reliability equals the reliability of that system, and vice versa. 
Thus, the study of domination reliability may reveal some new insights into system reliability.
\end{comment}

\section{Conclusion}
\label{Conclusions}

In this paper, we introduced a new network reliability measure called \emph{domination reliability}
and proved that its computation is NP-hard. 
Efficient solutions are only known for cographs.
For complete graphs, complete bipartite graphs, paths, and cycles explicit and recursive formul\ae{} can be given
in the case where all vertex reliabilities are equal. 
This case is of particular interest as it leads to a new graph polynomial,
which is called \emph{domination reliability polynomial},
and which is closely related to the domination polynomial---a graph polynomial which recently received considerable attention.
The authors propose to investigate the domination reliability polynomial in context with the domination polynomial, 
and to work out relationships with other graph polynomials. 
\par
From a practical point of view, the connection with the coverage probability of a hypergraph is significant:
Any coherent binary system can be transformed into a graph whose domination reliability equals the reliability of that system,
and vice versa.
Thus, the study of domination reliability may reveal some new insights into system reliability.

%\enlargethispage*{\baselineskip}

\end{document}